\tikzstyle{block} = [draw, fill=gray!20, rectangle, 
\tikzstyle{sum} = [draw, fill=gray!20, circle, node distance=1.5cm]
\tikzstyle{input} = [coordinate]
\tikzstyle{output} = [coordinate]
\tikzstyle{pinstyle} = [pin edge={to-,thin,black}]
\DeclareMathOperator{\rank}{rank}
\DeclareMathOperator{\tr}{tr}
\DeclareMathOperator{\im}{im}
\newcommand{\set}[2]{\left\{#1 \mid #2\right\}}
\newtheorem{theorem}{Theorem}
\newtheorem{lemma}[theorem]{Lemma}
\newtheorem{example}[theorem]{Example}
\newtheorem{proposition}[theorem]{Proposition}
\newtheorem{problem}{Problem}
\newtheorem{remark}[theorem]{Remark}
\newtheorem{definition}[theorem]{Definition}
\def\BibTeX{{\rm B\kern-.05em{\sc i\kern-.025em b}\kern-.08em
    T\kern-.1667em\lower.7ex\hbox{E}\kern-.125emX}}
\begin{document}
\title{Data-Driven Stabilization Using Prior Knowledge on Stabilizability and Controllability}
\author{Amir Shakouri, 
Henk J. van Waarde, Tren M.J.T. Baltussen, W.P.M.H. (Maurice) Heemels
\thanks{The work of Henk van Waarde was supported by
the Dutch Research Council under the NWO Talent Programme Veni
Agreement (VI.Veni.22.335). The work of Maurice Heemels was supported by the European Research Council (ERC) under the Advanced ERC grant agreement PROACTHIS, no. 101055384.}
\thanks{Amir Shakouri and Henk van Waarde are with the Bernoulli Institute for Mathematics, Computer Science and Artificial Intelligence, University of Groningen, The Netherlands (e-mail: a.shakouri@rug.nl; h.j.van.waarde@rug.nl). }
\thanks{Tren Baltussen and Maurice Heemels are with the Department of Mechanical Engineering, Eindhoven
University of Technology, The Netherlands
(e-mail: t.m.j.t.baltussen@tue.nl; 
m.heemels@tue.nl).}
}

\maketitle

\thispagestyle{empty}
\pagestyle{empty}

\begin{abstract}
In this work, we study data-driven stabilization of linear time-invariant systems using prior knowledge of system-theoretic properties, specifically stabilizability and controllability. To formalize this, we extend the concept of data informativity by requiring the existence of a controller that stabilizes all systems consistent with the data \emph{and} the prior knowledge. We show that if the system is controllable, then incorporating this as prior knowledge does not relax the conditions required for data-driven stabilization. Remarkably, however, we show that if the system is stabilizable, then using this as prior knowledge leads to necessary and sufficient conditions that are \emph{weaker} than those for data-driven stabilization without prior knowledge. In other words, data-driven stabilization is easier if one knows that the underlying system is stabilizable. We also provide new data-driven control design methods in terms of linear matrix inequalities that complement the conditions for informativity.
\end{abstract}

\begin{IEEEkeywords}
Data-driven control, stabilization, prior knowledge, controllability, stabilizability
\end{IEEEkeywords}

\section{Introduction}
\label{sec:introduction}
In recent years, it has been shown that stabilizing feedback laws can be directly obtained from measured data, as opposed to the classical approach of using a model of the system (see \cite[Ch. 1.2]{DBLSCT2025} for a historical account). This idea can be motivated by the argument that bypassing the modeling procedure may reduce the total amount of computations since it does not require system identification as an intermediate step, cf. \cite{dorfler2022bridging}. In addition, finding a stabilizing feedback directly from the data might be feasible even when the data do not contain sufficient information for accurate modeling of the system, see \cite[Ex. 19]{van2020data}. 

For linear time-invariant (LTI) systems, direct data-driven stabilization has been extensively studied in the literature. It was shown in \cite{de2019formulas} that a stabilizing state-feedback can be directly obtained from input-state data by solving a linear matrix inequality (LMI). The method provided in \cite{de2019formulas} requires the input data to be persistently exciting of a certain order, see \cite[p. 327]{Willems2005}. This condition implies that the system can be uniquely identified. Soon after, it was shown in \cite{van2020data} that persistency of excitation is not necessary. In fact, the necessary and sufficient conditions studied in \cite{van2020data} make it possible to obtain a data-driven stabilizing feedback with minimal requirements on the data. Interestingly, such conditions may hold even if unique identification is not feasible. Data-driven stabilization in the presence of process and measurement noise has been studied in \cite{van2020noisy,li2026controller}. Apart from computing stabilizing feedback gains, it has been shown that trajectory simulation \cite{markovsky2008data} and construction of predictive controllers \cite{coulson2019data,berberich2020data} can be performed directly from time series data. In addition, data-driven predictive control using frequency-domain data has also been studied in \cite{MeiNou_NMPC24a}.

The majority of papers on data-driven control work in the setting where the parameters of the system are completely unknown, which is interesting in its own right. However, for most physical systems, this is a rather conservative modeling framework. In fact, we often have access to some \mbox{\emph{prior knowledge}} on the system parameters, for example, because they represent physical quantities such as mass, spring constant, or conductance that are between given upper and lower bounds. For such cases, using both prior knowledge and data can lead to design methods that are less conservative than using data alone. For instance, in case computing a stabilizing feedback gain solely from the data is not feasible, prior knowledge could be used in conjunction with the data to enable such feedback design. This motivates developing methods that synthesize feedback laws by leveraging both the collected data and prior knowledge. Existing works on direct data-driven control that incorporate prior knowledge are rather scarce. So far, only prior knowledge in the form of bounds on the system parameters \cite{berberich2022combining,qi2023data,niknejad2025online} and exact knowledge of some parameters \cite{huang2025data} have been studied in the literature. 
In particular, it was shown in \cite{berberich2022combining} that if the prior knowledge admits a linear fractional representation, one can combine such knowledge with the data to design a feedback law by solving LMIs. Compared to direct data-driven control, the use of prior knowledge in system identification has a richer history. For instance, subspace identification using the system's stability as prior knowledge has been studied in \cite{van2002identification,lacy2003subspace}. This has been extended to incorporating eigenvalue constraints in \cite{miller2013subspace}. Prior knowledge on other system-theoretic properties, such as positivity \cite{de2002identification} and passivity \cite{goethals2003identification,shali2024towards}, is also among the investigated topics. The reader can refer to \cite{inoue2019subspace,khosravi2023kernel,10039069} and the references therein for other types of prior knowledge that have been used in system identification.

This note studies data-driven stabilization of LTI systems using prior knowledge on stabilizability and controllability. The incorporation of such system-theoretic properties has not yet received attention in data-driven control, and it poses significant technical challenges. This is, among others, due to the fact that sets of stabilizable and controllable systems are not convex, in contrast to the system sets considered in previous works \cite{berberich2022combining,qi2023data,niknejad2025online}. Nevertheless, the inclusion of this new type of prior knowledge is highly relevant because, in many cases, it is known a priori that the system is either controllable or stabilizable. This information can, for instance, be deduced from the structure of the system matrices, which has been studied in detail in the literature on \emph{structural controllability and stabilizability analysis} (see, e.g., \cite{jia2020unifying} and the references therein). 

In this work, we extend the data informativity framework of \cite{van2020data} to include prior knowledge by requiring the existence of a controller that stabilizes all systems consistent with the data \emph{and} the prior knowledge. Our main results are twofold. First, we show that data-driven stabilization using controllability as prior knowledge is equivalent to data-driven stabilization without prior knowledge (Theorem~\ref{th:1}). Therefore, if it is known that the true system is controllable, then this knowledge does not help in relaxing the conditions needed for data-driven stabilization. Next, we show that stabilizability as prior knowledge leads to necessary and sufficient conditions that are weaker than those of data-driven stabilization without prior knowledge (Theorems~\ref{th:12} and~\ref{th:3}). In addition, we provide a tractable method (Proposition~\ref{prop:K=[K1 K2]}), by which a stabilizing feedback gain can be computed from data while incorporating prior knowledge of stabilizability.   

\subsubsection*{Notation} Let $\mathbb{Z}$, $\mathbb{Z}_+$, and $\mathbb{N}$ denote the sets of integers, non-negative integers, and positive integers, respectively. 
Let \mbox{$M \in \mathbb{R}^{n \times m}$}. The Moore-Penrose pseudoinverse of $M$ is denoted by $M^\dagger$. The spectral norm of $M$ is denoted by $\|M\|$. For a set $\mathcal{X}\subseteq\mathbb{R}^{m}$, we define $M\mathcal{X}\coloneqq\set{Mx}{x\in\mathcal{X}}$. In case $M$ is square, we say it is \emph{positive definite}, denoted by $M > 0$, if it is symmetric and all its eigenvalues are positive. We denote the reachable subspace of a pair $(A,B)\in\mathbb{R}^{n\times n}\times \mathbb{R}^{n\times m}$ by $\mathcal{R}(A,B)\coloneqq \im\begin{bmatrix}
B & AB & \cdots & A^{n-1}B
\end{bmatrix}$.


\section{Problem Formulation}
\label{sec:II}

Let $n,m\in\mathbb{N}$. Consider the LTI system
\begin{equation}
\label{eq:1}
x(t+1)=A_\text{true}x(t)+B_\text{true}u(t),
\end{equation}
referred to as the \emph{true system}, where $t\in\mathbb{Z}_+$ denoted the time, \mbox{$x(t)\in\mathbb{R}^n$} is the state, and $u(t)\in\mathbb{R}^m$ is the input. The system matrices 
\begin{equation}
(A_\text{true},B_\text{true}) \in \mathcal{M} \coloneqq \mathbb{R}^{n \times n} \times \mathbb{R}^{n \times m}
\end{equation}
are assumed to be unknown. However, we have access to input-state data of the form
\begin{equation}
\label{eq:data}
\mathcal{D}\coloneqq\left(\begin{bmatrix}
    u(0)  & \cdots & u(T-1)
\end{bmatrix}, \begin{bmatrix}
    x(0) & \cdots & x(T)
\end{bmatrix}\right)
\end{equation} 
collected from \eqref{eq:1} within the time horizon $T\in\mathbb{N}$. Given $\mathcal{D}$, we define the matrices
\begin{equation}
\label{eq:defUmXmXp}
\begin{split}
U_{-}&\coloneqq \begin{bmatrix}
u(0) & \cdots & u(T-1)
\end{bmatrix},\\ 
X_{-}&\coloneqq \begin{bmatrix}
x(0) & \cdots & x(T-1)
\end{bmatrix},\ \text{and}\\ 
X_{+}&\coloneqq \begin{bmatrix}
x(1) & \cdots & x(T)
\end{bmatrix}.
\end{split}
\end{equation}

\subsection{Recap of Data-driven stabilization}

Roughly speaking, data-driven stabilization aims at solving the following problem:

\begin{center}
\textit{Given the data $\mathcal{D}$, find a $K$ such that $A_\textup{true}+B_\textup{true}K$ is Schur.}
\end{center}
Based on the collected input-state data, the true system satisfies 
\begin{equation}
\label{eq:lineqtrue}
X_+=A_\textup{true}X_-+B_\textup{true}U_-.
\end{equation}
However, the true system may not be the only one that satisfies this identity. Therefore, a feedback gain that guarantees the stability of the true system must stabilize \emph{all} systems $(A,B)$ satisfying \mbox{$X_+=AX_-+BU_-$}. Therefore, we define the set of \emph{data-consistent systems} as
\begin{equation}
\label{eq:exp}
\Sigma_\mathcal{D}\coloneqq \set{(A,B)\in\mathcal{M}}{X_+=AX_-+BU_-},
\end{equation}
and we sharpen the data-driven stabilization problem as follows:
\begin{center}
\textit{Given the data $\mathcal{D}$, find a $K$ such that $A+BK$ is Schur for all $(A,B)\in\Sigma_\mathcal{D}$.}
\end{center}
The feasibility of this problem depends on the given data. To elaborate on this, we recap the following notion of data informativity. 

\begin{definition}[{\cite[Def. 12]{van2020data}}]
\label{def:1}
The data $\mathcal{D}$ are called \emph{informative for stabilization} if there exists a $K\in\mathbb{R}^{m\times n}$ such that $A+BK$ is Schur for all $(A,B)\in\Sigma_\mathcal{D}$. 
\end{definition}

The following result provides a necessary and sufficient LMI condition for the informativity of the data for stabilization.

\begin{proposition}[{\cite[Thm. 17]{van2020data}}]
\label{prop:1}
The data $\mathcal{D}$ are informative for stabilization \emph{if and only if} there exists $\Theta\in\mathbb{R}^{T\times n}$ such that
\begin{equation}
\label{eq:dd_lmi}
X_-\Theta=\Theta^\top X_-^\top\ \text{ and }\ \begin{bmatrix}
X_-\Theta & X_+\Theta \\
\Theta^\top X_+^\top & X_-\Theta
\end{bmatrix}>0.
\end{equation}
Moreover, $A+BK$ is Schur for all $(A,B)\in\Sigma_\mathcal{D}$ \emph{if and only if} $K=U_-\Theta(X_-\Theta)^{-1}$ for some $\Theta$ satisfying \eqref{eq:dd_lmi}. 
\end{proposition}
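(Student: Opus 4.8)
The plan is to reformulate the problem linearly and then reduce it to a single discrete-time Lyapunov inequality. Collect the data into $N\coloneqq\begin{bmatrix}U_-\\X_-\end{bmatrix}$, so that $\Sigma_\mathcal{D}=\set{(A,B)\in\mathcal{M}}{\begin{bmatrix}B&A\end{bmatrix}N=X_+}$ is the (nonempty, since it contains the true system) solution set of a linear matrix equation, hence an affine subspace of $\mathcal{M}$. For a fixed gain $K$ the closed-loop matrix is $A+BK=\begin{bmatrix}B&A\end{bmatrix}\begin{bmatrix}K\\I\end{bmatrix}$, so informativity for stabilization asks that this matrix be Schur as $\begin{bmatrix}B&A\end{bmatrix}$ ranges over the affine set $\{W:WN=X_+\}$.

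The key step, which I expect to be the main obstacle, is to show that a stabilizing $K$ can exist only if $\im\begin{bmatrix}K\\I\end{bmatrix}\subseteq\im N$, and that under this inclusion the closed-loop matrix is the \emph{same} for every data-consistent pair. The second claim is immediate: if $\begin{bmatrix}K\\I\end{bmatrix}=N\Theta'$ for some $\Theta'$, then $\begin{bmatrix}B&A\end{bmatrix}\begin{bmatrix}K\\I\end{bmatrix}=\begin{bmatrix}B&A\end{bmatrix}N\Theta'=X_+\Theta'$ for every consistent $\begin{bmatrix}B&A\end{bmatrix}$. The difficulty lies in the necessity, because $\Sigma_\mathcal{D}$ is unbounded while the set of Schur matrices is neither bounded nor convex, so one cannot simply argue that eigenvalues must blow up along an arbitrary direction of unboundedness. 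The fix is to exploit the structure of the admissible perturbations: any $\begin{bmatrix}B&A\end{bmatrix}+S$ with $SN=0$ is consistent, and the resulting closed-loop perturbations $S\begin{bmatrix}K\\I\end{bmatrix}$ form the space of all matrices whose rows lie in $R\coloneqq\set{s\begin{bmatrix}K\\I\end{bmatrix}}{sN=0}$. If the inclusion fails then $R\neq\{0\}$, and picking any $0\neq r\in R$ this space contains every rank-one outer product $vr$ with $v\in\mathbb{R}^n$; choosing $v=r^\top$ and scaling, the trace of the closed-loop matrix grows without bound, forcing an eigenvalue outside the unit disk. Hence the inclusion is necessary, which algebraically means $K=U_-\Theta'$ and $I=X_-\Theta'$ for some $\Theta'\in\mathbb{R}^{T\times n}$.

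With the closed loop reduced to the single matrix $X_+\Theta'$, the problem becomes purely Lyapunov-theoretic: $X_+\Theta'$ is Schur if and only if there exists $P=P^\top>0$ with $X_+\Theta' P\,{\Theta'}^\top X_+^\top-P<0$, while the constraint $X_-\Theta'=I$ is carried along. I would then absorb $P$ into the free variable via the change of variables $\Theta\coloneqq\Theta' P$. This simultaneously symmetrizes and positivizes the first block, since $X_-\Theta=X_-\Theta'P=P>0$ gives $X_-\Theta=\Theta^\top X_-^\top$, and it turns the Lyapunov inequality into $X_-\Theta-X_+\Theta(X_-\Theta)^{-1}\Theta^\top X_+^\top>0$. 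A Schur-complement step with pivot $X_-\Theta>0$ then yields exactly the block LMI in \eqref{eq:dd_lmi}.

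Finally, for the ``moreover'' claim I would read the change of variables backwards: given any $\Theta$ solving \eqref{eq:dd_lmi}, the $(2,2)$ block shows $X_-\Theta>0$ is invertible, and setting $\Theta'\coloneqq\Theta(X_-\Theta)^{-1}$ recovers $X_-\Theta'=I$ together with the Lyapunov inequality for $X_+\Theta'$, so the unique associated gain is $K=U_-\Theta'=U_-\Theta(X_-\Theta)^{-1}$. Since every stabilizing $K$ arises in this form and every solution of the LMI produces such a $K$, this establishes both the equivalence and the stated parametrization of all stabilizing gains.
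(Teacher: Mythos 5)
Your proposal is correct, and it follows the same two-stage architecture as the proof of this result in \cite{van2020data} (the present paper only cites the proposition, but it reproduces the relevant machinery in Lemmas~\ref{lem:horn_2.4.P10} and~\ref{lem:A+delta*B}): first force the inclusion $\im\begin{bmatrix} K^\top & I \end{bmatrix}^\top \subseteq \im\begin{bmatrix} U_-^\top & X_-^\top \end{bmatrix}^\top$, which collapses the closed loop to the single data-determined matrix $X_+\Theta'$, and then convert the resulting discrete Lyapunov inequality into the block LMI via the substitution $\Theta=\Theta' P$ and a Schur complement; your treatment of this second stage, including reading the parametrization $K=U_-\Theta(X_-\Theta)^{-1}$ backwards from a feasible $\Theta$, is exactly the standard argument and is carried out correctly. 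The genuine difference lies in how you prove the inclusion is necessary, which is the crux of the theorem. The route in \cite{van2020data} --- echoed in this paper's proof of statement (a) of Theorem~\ref{th:nec_cond} --- shows that Schurness of $M+\alpha N$ for all $\alpha$ makes every $\tr((M+\alpha N)^k)$ a bounded, hence constant, polynomial in $\alpha$, so $\tr(N^k)=0$ for all $k$, whence $N$ is nilpotent by Lemma~\ref{lem:horn_2.4.P10}; then, using closure of the perturbation set under left multiplication, the multiplier $N^\top$ and the fact that a symmetric nilpotent matrix vanishes give $N=0$. You instead exploit the perturbation freedom directly: the admissible closed-loop perturbations contain every matrix whose rows lie in $R$, in particular the positive semidefinite rank-one direction $r^\top r$, after which the single observation that a Schur matrix has $|\tr M|\leq n$ yields the contradiction. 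This is more elementary --- it needs neither the trace characterization of the spectrum nor the nilpotency detour --- and it fully replaces Lemma~\ref{lem:A+delta*B} for this proposition. What the heavier lemma buys the paper is reusability in settings where the perturbations are structured and stability is only known outside a finite exceptional set, as in the proof of Theorem~\ref{th:nec_cond}; your trace trick would in fact adapt there as well, since the trace remains an unbounded polynomial on $\mathbb{R}\backslash\mathcal{F}$, but the paper's formulation packages the argument once for repeated invocation.
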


Based on Proposition~\ref{prop:1}, a necessary condition for the informativity of the data for stabilization is that $\rank X_-=n$. This condition requires the number of data samples to satisfy $T\geq n$.

\subsection{Data-driven stabilization using prior knowledge}

Let $\Sigma_\text{pk}\subseteq\mathcal{M}$ be a set capturing our prior knowledge of the true system, i.e.,
\begin{equation}
(A_\text{true},B_\text{true})\in\Sigma_\text{pk}.
\end{equation}
Using this prior knowledge, we extend the data-driven stabilization problem to the following.

\begin{center}
\textit{Given the data $\mathcal{D}$ and the set of prior knowledge $\Sigma_\textup{pk}$, find a $K$ such that $A+BK$ is Schur for all $(A,B)\in\Sigma_\mathcal{D}\cap\Sigma_\textup{pk}$.}
\end{center}

The feasibility of this problem depends on the given data \emph{and} the prior knowledge. To study this problem, we extend the notion of data informativity in Definition~\ref{def:1} to the following, which takes the prior knowledge into account. 

\begin{definition}
\label{def:2}
The data $\mathcal{D}$ are called $\Sigma_\textup{pk}$--\emph{informative for stabilization} if there exists a $K\in\mathbb{R}^{m\times n}$ such that $A+BK$ is Schur for all $(A,B)\in\Sigma_\mathcal{D}\cap\Sigma_\text{pk}$. 
\end{definition}

We note that $\mathcal{M}$--informativity for stabilization is equivalent to informativity for stabilization in the sense of Definition~\ref{def:1}.  In this work, we are interested in two important sets of prior knowledge that capture the \emph{stabilizability} and \emph{controllability} of the true system. Denote the sets of controllable and stabilizable systems, respectively, by
\begin{equation}
\begin{split}
\Sigma_\text{cont}&\coloneqq\set{(A,B)\in\mathcal{M}}{(A,B)\text{ is controllable}},\ \text{and} \\
\Sigma_\text{stab}&\coloneqq\set{(A,B)\in\mathcal{M}}{(A,B)\text{ is stabilizable}}.
\end{split}
\end{equation}

The following example demonstrates that, in case
\begin{equation}
\label{eq:pk=stab}
\Sigma_\text{pk}=\Sigma_\text{stab},
\end{equation}
the conditions for $\Sigma_\text{pk}$-informativity for stabilization are in general \emph{weaker} than those for informativity for stabilization.

\begin{example}
\label{ex:1}
Consider the input data $u(0)=1$, \mbox{$u(1)=2$}, and $u(2)=-1$, and the state data $x(0)=\begin{bmatrix}
1 & 0
\end{bmatrix}^\top$, \mbox{$x(1)=\begin{bmatrix}
2 & 0
\end{bmatrix}^\top$}, $x(2)=\begin{bmatrix}
4 & 0
\end{bmatrix}^\top$, and $x(3)=\begin{bmatrix}
3 & 0
\end{bmatrix}^\top$. The set of data-consistent systems reads
\begin{equation}
\Sigma_\mathcal{D}=\set{\left(\begin{bmatrix}
1 & \alpha \\
0 & \beta
\end{bmatrix},\begin{bmatrix}
1 \\ 0
\end{bmatrix}\right)}{\alpha,\beta\in\mathbb{R}}.
\end{equation}
It follows from Proposition~\ref{prop:1} that the data are not informative for stabilization since $X_-=\begin{bmatrix}
1 & 2 & 4 \\
0 & 0 & 0
\end{bmatrix}$ does not have full row rank. However, we have
\begin{equation}
\Sigma_\mathcal{D}\cap\Sigma_\text{stab}=\set{\left(\begin{bmatrix}
1 & \alpha \\
0 & \beta
\end{bmatrix},\begin{bmatrix}
1 \\ 0
\end{bmatrix}\right)}{\alpha\in\mathbb{R},|\beta|<1}.
\end{equation}
It is evident that $K=\begin{bmatrix}
-1 & 0
\end{bmatrix}$ is a stabilizing feedback gain for all the systems in \mbox{$\Sigma_\mathcal{D}\cap\Sigma_\text{stab}$}. Therefore, the data are \mbox{$\Sigma_\text{stab}$--informative} for stabilization. 
\end{example}

Example~\ref{ex:1} shows that data-driven stabilization using stabilizability as prior knowledge may be possible in case Proposition~\ref{prop:1} fails to provide a feedback gain for all data-consistent systems. This motivates the study of stabilizability as prior knowledge for data-driven stabilization. Another closely related prior knowledge that is studied in this paper is controllability. Formally, we thus consider the following problem. 
\begin{problem}
\label{prob:1}
Find necessary and sufficient conditions under which the data $\mathcal{D}$ are (i) $\Sigma_\text{cont}$--informative for stabilization; \mbox{(ii) $\Sigma_\text{stab}$--informative} for stabilization.
\end{problem}

\section{Controllability as Prior Knowledge}

The main result of this section is the following theorem presenting the solution for Problem~\ref{prob:1}(i). This theorem shows that controllability as prior knowledge is \emph{not} useful, i.e., the data-driven stabilization using this prior knowledge is equivalent to data-driven stabilization without any prior knowledge. 

\begin{theorem}
\label{th:1}
Suppose that $(A_\text{true},B_\text{true})\in\Sigma_\text{cont}$. Then, the following statements are equivalent:
\begin{enumerate}[label=(\alph*),ref=\ref{th:1}(\alph*)]
    \item The data $\mathcal{D}$ are $\Sigma_\text{cont}$--informative for stabilization.
    \item The data $\mathcal{D}$ are informative for stabilization.
\end{enumerate}
Moreover, if $K$ is such that $A+BK$ is Schur for all \mbox{$(A,B)\in\Sigma_\mathcal{D}\cap\Sigma_\text{cont}$}, then $A+BK$ is Schur for all $(A,B)\in\Sigma_\mathcal{D}$. 
\end{theorem}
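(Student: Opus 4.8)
The plan is to derive the entire theorem from the final (``moreover'') statement. The implication $(b)\Rightarrow(a)$ is immediate: since $\Sigma_\mathcal{D}\cap\Sigma_\text{cont}\subseteq\Sigma_\mathcal{D}$, any $K$ that renders $A+BK$ Schur for all $(A,B)\in\Sigma_\mathcal{D}$ does so in particular on the smaller set. Conversely, granting the moreover statement, $(a)\Rightarrow(b)$ follows at once: a $K$ witnessing $\Sigma_\text{cont}$--informativity stabilizes all of $\Sigma_\mathcal{D}\cap\Sigma_\text{cont}$, hence by the moreover claim all of $\Sigma_\mathcal{D}$, so $\mathcal{D}$ is informative for stabilization. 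Thus it suffices to prove the moreover statement, and I fix a $K$ with $A+BK$ Schur for every controllable $(A,B)\in\Sigma_\mathcal{D}$.

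First I would establish that $\Sigma_\mathcal{D}\cap\Sigma_\text{cont}$ is dense in $\Sigma_\mathcal{D}$. The set $\Sigma_\mathcal{D}$ is an affine subspace of $\mathcal{M}$ (the solution set of the linear equation $X_+=AX_-+BU_-$), hence irreducible and isomorphic to some $\mathbb{R}^k$. Uncontrollability amounts to the simultaneous vanishing of all $n\times n$ minors of the controllability matrix, i.e.\ a Zariski-closed condition; restricted to $\Sigma_\mathcal{D}$ it is a \emph{proper} closed subset because $(A_\text{true},B_\text{true})\in\Sigma_\mathcal{D}$ is controllable and thus makes some minor nonzero. Since the complement of the zero set of a polynomial that is not identically zero on $\mathbb{R}^k$ is dense, the density claim follows. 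Consequently, given any $(A,B)\in\Sigma_\mathcal{D}$, I can pick controllable $(A_j,B_j)\in\Sigma_\mathcal{D}$ with $(A_j,B_j)\to(A,B)$; each $A_j+B_jK$ is Schur, and since the eigenvalues of a matrix depend continuously on its entries, every eigenvalue of $A+BK$ has modulus at most $1$.

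The main obstacle is precisely to upgrade this ``modulus $\le 1$'' to the strict ``Schur,'' i.e.\ to exclude eigenvalues sitting exactly on the unit circle for the (possibly uncontrollable) boundary systems. I would resolve this through the affine image $\mathcal{A}\coloneqq\set{A+BK}{(A,B)\in\Sigma_\mathcal{D}}$, which is itself an affine subspace of $\mathbb{R}^{n\times n}$ on which, by the previous paragraph, every matrix has spectral radius at most $1$. The key observation is that along any line $t\mapsto M_0+tN$ contained in $\mathcal{A}$, each coefficient of the characteristic polynomial of $M_0+tN$ is, up to sign, an elementary symmetric function of the eigenvalues and is therefore bounded in absolute value by $\binom{n}{k}$ uniformly in $t$; being also a polynomial in $t$, each such coefficient must be constant. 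Hence the characteristic polynomial---and in particular the spectral radius---is constant along every line in $\mathcal{A}$, and since any two points of the affine subspace $\mathcal{A}$ lie on a common line within $\mathcal{A}$, the spectral radius is constant on all of $\mathcal{A}$. This constant value is attained at the closed loop of the controllable true system, where it is strictly less than $1$; therefore every matrix in $\mathcal{A}$ is Schur, i.e.\ $A+BK$ is Schur for all $(A,B)\in\Sigma_\mathcal{D}$, which is exactly the moreover statement.
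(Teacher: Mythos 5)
Your proof is correct, and it shares the paper's overall skeleton---reduce the equivalence to the ``moreover'' claim, exploit genericity of controllability inside the affine set $\Sigma_\mathcal{D}$, then use a ``bounded polynomial coefficients must be constant'' rigidity argument to rule out spectral degradation at uncontrollable points---but the execution differs in two genuine ways. First, the paper never invokes continuity of eigenvalues: it fixes a single uncontrollable $(\bar A,\bar B)\in\Sigma_\mathcal{D}$, joins it to $(A_\text{true},B_\text{true})$ by the line $\alpha\mapsto(A_\text{true}+\alpha A_0,B_\text{true}+\alpha B_0)$, shows via a determinant-count (Lemma~\ref{lem:cont_direct}, which is the one-dimensional version of your Zariski-density step) that all but finitely many points on this line are controllable and hence Schur under $K$, and then applies a rigidity lemma (Lemma~\ref{lem:A+delta*B}) that only needs Schur-ness on a ray minus a finite set. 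Your preliminary density-plus-continuity step, which gives spectral radius at most $1$ on all of $\mathcal{A}$, is exactly what buys you boundedness along \emph{entire} lines and lets you avoid that co-finite bookkeeping. Second, the rigidity mechanism itself differs: the paper bounds the power sums $\tr((M+\delta N)^k)$ and must then invoke a trace characterization of the spectrum (Lemma~\ref{lem:horn_2.4.P10}) to convert equal power sums into equal eigenvalues, whereas you bound the characteristic-polynomial coefficients (elementary symmetric functions, by $\binom{n}{k}$), so constancy of the characteristic polynomial---and hence of the spectrum---is immediate without any auxiliary lemma. Your variant is therefore somewhat more self-contained for this particular theorem; the paper's formulation pays off later, since Lemma~\ref{lem:A+delta*B}, including the nilpotency conclusion you never need, is reused in the proof of Theorem~\ref{th:nec_cond}.
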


The following example provides an intuition on the fact that controllability as prior knowledge does not affect the conditions required for data-driven stabilization. 

\begin{example}
\label{ex:2}
For the sake of illustration, consider a system with $n=1$ and $m=2$ as follows:
\begin{equation}
x(t+1)=ax(t)+\begin{bmatrix}
b_1 & b_2
\end{bmatrix}u(t).
\end{equation}
Starting from $x(0)=-1$, we apply $u(0)=\begin{bmatrix}
1 & -1
\end{bmatrix}^\top$ and we measure $x(1)=-1$. Given these data, the set $\Sigma_\mathcal{D}$ consists of all systems with parameters $a$, $b_1$, and $b_2$ lying on the plane shown in Fig. 1. The only uncontrollable system on this plane corresponds to $a=1$ and $b_1=b_2=0$, which is shown by the red dot. Now, Theorem~\ref{th:1} states that if a feedback gain stabilizes all systems on the plane excluding the one shown in red, then it also stabilizes the red point. 
\end{example}

\begin{figure}[h]
    \centering
    \includegraphics[width=1\linewidth]{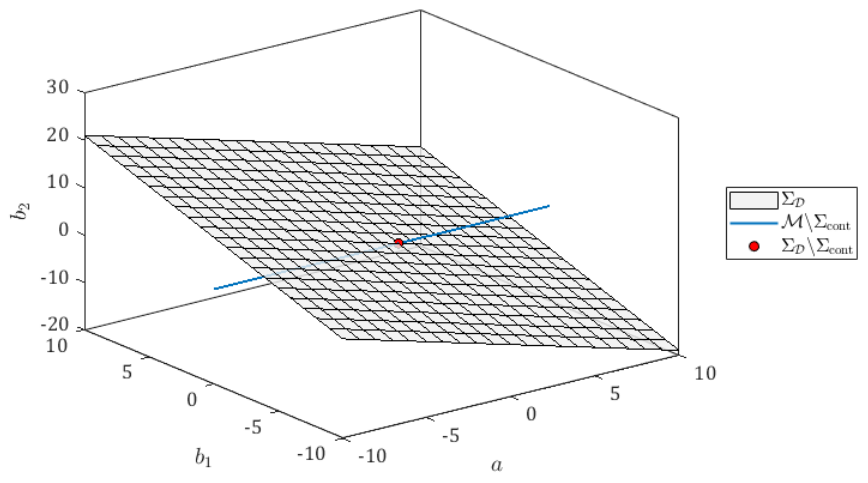}
    \caption{Set of data-consistent systems, set of uncontrollable systems, and their intersection for Example~\ref{ex:2}.}
    \label{fig:1}
\end{figure}

To prove Theorem~\ref{th:1}, we need three auxiliary results. The first result is the following lemma, showing that if a parameterized family of systems is controllable at a single point, then it is controllable at almost all points. 

\begin{lemma}
\label{lem:cont_direct}
Let $(M,N)\in\Sigma_{\text{cont}}$, $M_0\in\mathbb{R}^{n\times n}$, and $N_0\in\mathbb{R}^{n\times m}$. Then, the pair $(M+\alpha M_0,N+\alpha N_0)$ is controllable for all but at most $n^2$ values of $\alpha\in\mathbb{R}$.

\end{lemma}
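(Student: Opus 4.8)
The plan is to reduce controllability to the non-vanishing of a single polynomial in $\alpha$ and then to control the degree of that polynomial. Write $M(\alpha)\coloneqq M+\alpha M_0$ and $N(\alpha)\coloneqq N+\alpha N_0$, and recall that a pair $(A,B)\in\mathcal{M}$ is controllable precisely when its controllability matrix $\begin{bmatrix} B & AB & \cdots & A^{n-1}B\end{bmatrix}$ (whose image is $\mathcal{R}(A,B)$) has full row rank $n$; equivalently, when at least one of its $n\times n$ minors is nonzero, and it fails to be controllable precisely when \emph{all} such minors vanish. I would therefore study the parametrized controllability matrix
\[
C(\alpha)\coloneqq \begin{bmatrix} N(\alpha) & M(\alpha)N(\alpha) & \cdots & M(\alpha)^{n-1}N(\alpha)\end{bmatrix}\in\mathbb{R}^{n\times nm},
\]
whose entries are polynomials in $\alpha$, and track the values of $\alpha$ for which $\rank C(\alpha)<n$.

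The next step is the degree bound. Since $M(\alpha)$ and $N(\alpha)$ are affine in $\alpha$, each entry of the block $M(\alpha)^{k}N(\alpha)$ is a polynomial in $\alpha$ of degree at most $k+1\le n$ for $k=0,\dots,n-1$; consequently every entry of $C(\alpha)$ has degree at most $n$. Any $n\times n$ minor of $C(\alpha)$ is, by the Leibniz formula, a signed sum of products of $n$ entries, one from each of the selected columns, so each such minor is a polynomial in $\alpha$ of degree at most $n\cdot n=n^{2}$ (this bound being attained even when all $n$ selected columns come from the top-degree block $M(\alpha)^{n-1}N(\alpha)$).

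Finally I would conclude with a zero-counting argument. Because $(M,N)=(M(0),N(0))$ is controllable by hypothesis, $C(0)$ has full row rank, so there is an $n\times n$ minor $p(\alpha)$ with $p(0)\neq 0$; by the previous step $p$ is a nonzero univariate polynomial of degree at most $n^{2}$ and hence has at most $n^{2}$ real roots. At every $\alpha$ for which $(M(\alpha),N(\alpha))$ is uncontrollable, $C(\alpha)$ has rank strictly less than $n$, so all its $n\times n$ minors vanish, and in particular $p(\alpha)=0$. Thus the set of ``bad'' values of $\alpha$ is contained in the zero set of $p$ and therefore has at most $n^{2}$ elements, which is exactly the claim. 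I expect the only real (and modest) subtlety to lie in the degree bookkeeping of the second step — in particular, checking that $M(\alpha)^{k}N(\alpha)$ has degree at most $k+1$ and that choosing $n$ columns cannot push a minor beyond degree $n^{2}$; the rest is the routine fact that a nonzero polynomial has finitely many roots.
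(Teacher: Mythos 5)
Your proposal is correct and follows essentially the same route as the paper: both fix an $n\times n$ submatrix (minor) of the parametrized controllability matrix that is nonsingular at $\alpha=0$, bound its determinant's degree by $n^2$ via the Leibniz formula (entries having degree at most $n$), and conclude by counting roots of this nonzero polynomial. The degree bookkeeping you flag as the only subtlety is handled identically in the paper.
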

\begin{proof}
Denote the Kalman controllability matrix of the pair $(M+\alpha M_0,N+\alpha N_0)$ by
\begin{equation}
\mathcal{C}(\alpha)\coloneqq\begin{bmatrix}
N+\alpha N_0 & \cdots & (M+\alpha M_0)^{n-1}(N+\alpha N_0)
\end{bmatrix}.
\end{equation}
Let $\bar{\mathcal{C}}(\alpha)\in\mathbb{R}^{n\times n}$ be a square submatrix of $\mathcal{C}(\alpha)$ such that $\bar{\mathcal{C}}(0)$ is nonsingular. Observe that the entries of $\bar{\mathcal{C}}(\alpha)$ are polynomials of $\alpha$ of degree at most $n$. It can be concluded from, e.g., Leibniz's formula for determinants, that $\det(\bar{\mathcal{C}}(\alpha))$ is a polynomial of $\alpha$ of degree at most $n^2$. This polynomial has at most $n^2$ roots counting multiplicity, which implies that $\rank \bar{\mathcal{C}}(\alpha)<n$ for at most $n^2$ distinct values \mbox{of $\alpha$}. Therefore, $\mathcal{C}(\alpha)$ is rank deficient for at most $n^2$ distinct values of $\alpha$.
\end{proof}

The second auxiliary result is the following lemma, providing a necessary and sufficient condition for two matrices to share the same eigenvalues.

\begin{lemma}[{\cite[2.4.P10]{horn2012matrix}}]
\label{lem:horn_2.4.P10}
Let $M,N\in\mathbb{R}^{n\times n}$. Then, $M$ and $N$ share the same eigenvalues with the same algebraic multiplicities \emph{if and only if} for every $k\in\{1,\ldots,n\}$ we have $\tr(M^k)=\tr(N^k)$. 
\end{lemma}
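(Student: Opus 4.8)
The plan is to reduce the eigenvalue condition to an equality of characteristic polynomials, and then to connect the coefficients of the characteristic polynomial to the quantities $\tr(M^k)$ through Newton's identities. Write the eigenvalues of $M$ and $N$ over $\mathbb{C}$, counted with algebraic multiplicity, as $\lambda_1,\ldots,\lambda_n$ and $\mu_1,\ldots,\mu_n$. The statement that $M$ and $N$ share the same eigenvalues with the same algebraic multiplicities is precisely the statement that the multisets $\{\lambda_i\}$ and $\{\mu_i\}$ coincide, which is equivalent to $M$ and $N$ having identical characteristic polynomials. Since $\det(sI-M)=\prod_{i=1}^n(s-\lambda_i)=s^n-e_1 s^{n-1}+\cdots+(-1)^n e_n$, where $e_k$ is the $k$-th elementary symmetric polynomial in the eigenvalues, equality of characteristic polynomials is in turn the same as requiring $e_k(\lambda)=e_k(\mu)$ for all $k\in\{1,\ldots,n\}$. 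This reframes the whole lemma as: the $n$ power sums $p_k\coloneqq\sum_{i=1}^n\lambda_i^k$ determine, and are determined by, the $n$ elementary symmetric polynomials $e_1,\ldots,e_n$.

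For the necessity direction I would argue directly: if the eigenvalue multisets agree, then every symmetric function of them agrees, so in particular the power sums agree. Because the eigenvalues of $M^k$ are exactly $\lambda_1^k,\ldots,\lambda_n^k$ with multiplicity, one has $\tr(M^k)=p_k(\lambda)$ and likewise $\tr(N^k)=p_k(\mu)$, whence $\tr(M^k)=\tr(N^k)$ for every $k$.

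For sufficiency I would invoke Newton's identities, which relate the power sums to the elementary symmetric polynomials via
\[
p_k-e_1 p_{k-1}+\cdots+(-1)^{k-1}e_{k-1}p_1+(-1)^k k\,e_k=0,\qquad k=1,\ldots,n.
\]
Read as a recursion, the $k$-th identity determines $k\,e_k$, hence $e_k$, from the already-computed $e_1,\ldots,e_{k-1}$ and from $p_1,\ldots,p_k$; solving successively shows that $(p_1,\ldots,p_n)$ uniquely determines $(e_1,\ldots,e_n)$. Applying this to both matrices, the hypothesis $\tr(M^k)=\tr(N^k)$ gives $p_k(\lambda)=p_k(\mu)$ for $k=1,\ldots,n$, hence $e_k(\lambda)=e_k(\mu)$ for all $k$, so the characteristic polynomials coincide and the eigenvalue multisets are equal.

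The only delicate point, and the step I would flag as the crux, is the invertibility of the Newton recursion: extracting $e_k$ requires dividing by $k$, so the argument rests on the base field having characteristic zero. Since the eigenvalues lie in $\mathbb{C}$ this holds automatically, and no additional care is needed; the complex-valued eigenvalues cause no difficulty because the identities $\tr(M^k)=p_k(\lambda)$ hold over $\mathbb{C}$ regardless of whether the $\lambda_i$ are real.
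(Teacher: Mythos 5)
Your proof is correct and complete: the reduction to equality of characteristic polynomials, the use of the spectral mapping theorem for the necessity direction, and the inductive inversion of Newton's identities for sufficiency (including the observation that division by $k$ is harmless over $\mathbb{C}$) are all sound. Note that the paper itself offers no proof of this lemma --- it is stated as a cited result from Horn and Johnson \cite[2.4.P10]{horn2012matrix} --- so there is no in-paper argument to compare against; your Newton's-identities argument is the standard proof of this fact.
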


The third auxiliary result discusses the stability of matrix pencils, which is presented in the following lemma.

\begin{lemma}
\label{lem:A+delta*B}
Let $\varepsilon\in\mathbb{R}$, $\mathcal{F}\subset\mathbb{R}$ be a finite set, and $M,N\in\mathbb{R}^{n\times n}$ be such that $M+\delta N$ is Schur for all $\delta\in[\varepsilon,\infty)\backslash\mathcal{F}$. Then, $N$ is nilpotent and $M+\delta N$ is Schur for all $\delta\in\mathbb{R}$. 
\end{lemma}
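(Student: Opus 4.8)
The plan is to study, for each $k\in\{1,\dots,n\}$, the scalar function $\delta\mapsto \tr\big((M+\delta N)^k\big)$, which is a polynomial in $\delta$ of degree at most $k$. The crucial observation is that whenever $M+\delta N$ is Schur, its $n$ eigenvalues lie in the open unit disk, so $|\tr((M+\delta N)^k)|\le n$. Since $[\varepsilon,\infty)\backslash\mathcal{F}$ is unbounded (only finitely many points are removed), this polynomial is bounded on an unbounded set and must therefore be constant in $\delta$.

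Having established that each $\tr((M+\delta N)^k)$ is constant, I would extract two consequences. First, expanding $(M+\delta N)^k$ shows that the coefficient of $\delta^k$ equals $\tr(N^k)$, so constancy forces $\tr(N^k)=0$ for every $k\in\{1,\dots,n\}$. Applying Lemma~\ref{lem:horn_2.4.P10} to $N$ and the zero matrix then shows that all eigenvalues of $N$ are zero, i.e., $N$ is nilpotent. Second, evaluating the constant polynomial at $\delta=0$ gives $\tr((M+\delta N)^k)=\tr(M^k)$ for all $\delta\in\mathbb{R}$ and all $k\in\{1,\dots,n\}$, so by Lemma~\ref{lem:horn_2.4.P10} the matrices $M+\delta N$ and $M$ share the same eigenvalues with the same algebraic multiplicities for every $\delta$.

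To finish, I would pick any $\delta_0\in[\varepsilon,\infty)\backslash\mathcal{F}$, which exists since a finite set cannot exhaust an unbounded interval. By hypothesis $M+\delta_0 N$ is Schur, so its eigenvalues lie in the open unit disk; but these coincide with the eigenvalues of $M$, and hence with those of $M+\delta N$ for every $\delta$. Therefore $M+\delta N$ is Schur for all $\delta\in\mathbb{R}$.

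The main obstacle is recognizing that Schur stability on the given interval translates into a uniform bound on the trace-powers; once this is seen, the elementary principle that a polynomial bounded on an unbounded set is constant does the heavy lifting. The remaining steps, namely identifying the leading coefficient as $\tr(N^k)$ and the two invocations of Lemma~\ref{lem:horn_2.4.P10}, are routine.
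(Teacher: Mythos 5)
Your proof is correct and follows essentially the same route as the paper: bound $\tr\bigl((M+\delta N)^k\bigr)$ by $n$ on an unbounded set of $\delta$, conclude each trace polynomial is constant, read off $\tr(N^k)=0$ from the leading coefficient to get nilpotency via Lemma~\ref{lem:horn_2.4.P10}, and invoke that lemma again to match the spectra of $M+\delta N$ and $M$ for all $\delta$. Your final step of explicitly selecting $\delta_0\in[\varepsilon,\infty)\backslash\mathcal{F}$ to certify that the common spectrum lies in the open unit disk is a small point the paper leaves implicit, but the arguments are otherwise identical.
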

\begin{proof}
Let $\bar{\delta}>0$ be sufficiently large such that $[\bar{\delta},\infty)\cap\mathcal{F}=\varnothing$. We note that the trace of a matrix is the sum of its eigenvalues. Since $M+\delta N$ is Schur for all $\delta\in[\bar{\delta},\infty)$, we have
\begin{equation}
\label{eq:lem:A+delta*B_pf-1}
|\tr ((M+\delta N)^k)|\leq n ,
\end{equation}
for all $\delta\in[\bar{\delta},\infty)$ and all $k\in\mathbb{N}$. Note that for every $k\in\mathbb{N}$,
\begin{equation}
p_k(\delta)= \tr ((M+\delta N)^k)
\end{equation}
is a polynomial of degree at most $k$. The boundedness of $p_k(\delta)$ in the interval $[\bar{\delta},\infty)$ implies that $p_k(\delta)$ is constant, i.e., the coefficients of $\delta^i$ are zero for all $i\in\{1,\ldots,k\}$. In particular, the coefficient corresponding to $\delta^k$, i.e., $\tr(N^k)$, is equal to zero. Thus, $\tr(N^k)=0$ for all $k\in\mathbb{N}$. Based on Lemma~\ref{lem:horn_2.4.P10}, this implies that all eigenvalues of $N$ are equal to zero. Thus, $N$ is nilpotent. In addition, since $p_k(\delta)$ is constant for all $k\in\mathbb{N}$, we have $p_k(\delta)=p_k(0)$, thus, \mbox{$\tr(M^k)=\tr((M+\delta N)^k)$} for all $k\in\mathbb{N}$. Now, we use Lemma~\ref{lem:horn_2.4.P10} to conclude that $M$ and $M+\delta N$ share the same eigenvalues for all $\delta\in\mathbb{R}$. Therefore, $M+\delta N$ is Schur for all $\delta\in\mathbb{R}$. 
\end{proof}

The proof of Theorem~\ref{th:1} now follows from Lemmas~\ref{lem:cont_direct} and~\ref{lem:A+delta*B}. To facilitate the proof, we introduce the following notation:
\begin{equation}
\Sigma_\mathcal{D}^0\coloneqq\set{(A_0,B_0)\in\mathcal{M}}{A_0X_-+B_0U_-=0}.
\end{equation}
It is evident that the set $\Sigma_\mathcal{D}^0$ satisfies $\Sigma_\mathcal{D}^0+\Sigma_\mathcal{D}=\Sigma_\mathcal{D}$.

\textit{Proof of Theorem~\ref{th:1}:} 
(b)$\Rightarrow$(a): This implication is evident since if $K\in\mathbb{R}^{m\times n}$ is such that $A+BK$ is Schur for all $(A,B)\in\Sigma_\mathcal{D}$, then $A+BK$ is Schur for all $(A,B)\in\Sigma_\mathcal{D}\cap\Sigma_\text{cont}$.  

(a)$\Rightarrow$(b): This implication obviously holds if $\Sigma_\mathcal{D}\subseteq\Sigma_\text{cont}$. Now, assume that there exists $(\bar{A},\bar{B})\in\Sigma_\mathcal{D}$ that is not controllable. Let $K$ be such that $A+BK$ is Schur for all $(A,B)\in\Sigma_\mathcal{D}\cap\Sigma_\text{cont}$. It suffices to show that $\bar{A}+\bar{B}K$ is Schur. To that end, note that the pair $(\bar{A},\bar{B})$ satisfies
\begin{equation}
X_+=\bar{A}X_-+\bar{B}U_-.
\end{equation}
Since the true system also satisfies $X_+=A_\textup{true}X_-+B_\textup{true}U_-$, we have
\begin{equation}
\bar{A}=A_\text{true}+A_0\ \text{ and }\  \bar{B}=B_\text{true}+B_0,
\end{equation}
for some $(A_0,B_0)\in\Sigma_\mathcal{D}^0$. We note that
\begin{equation}
(A_\text{true}+\alpha A_0,B_\text{true}+\alpha B_0)\in\Sigma_\mathcal{D}\ \text{ for all }\ \alpha\in\mathbb{R}.
\end{equation}
Based on Lemma~\ref{lem:cont_direct}, since $(A_\text{true},B_\text{true})\in\Sigma_\text{cont}$, we have \linebreak $(A_\text{true}+\alpha A_0,B_\text{true}+\alpha B_0)\in\Sigma_\mathcal{D}\cap\Sigma_\text{cont}$ for all but a finite number of $\alpha \in \mathbb{R}$. Thus, $A_\text{true}+B_\text{true}K+\alpha(A_0+B_0 K)$ is Schur for all but a finite number of $\alpha \in \mathbb{R}$. Based on Lemma~\ref{lem:A+delta*B}, this implies that $A_\text{true}+B_\text{true}K+\alpha(A_0+B_0 K)$ is Schur for all $\alpha\in\mathbb{R}$. We take $\alpha=1$ and conclude that $\bar{A}+\bar{B}K$ is Schur. \hfill \QED

\section{Stabilizability as Prior Knowledge}
\label{sec:IV}

In this section, we provide the solution for Problem~\ref{prob:1}(ii). First, we study necessary conditions for $\Sigma_\text{stab}$--informativity of the data for stabilization in Section~\ref{sec:IV-0}. Next, we provide necessary and sufficient conditions for cases $\rank X_-=n$ and $\rank X_-<n$ in Sections~\ref{sec:IV-A} and~\ref{sec:IV-B}, respectively. 

\subsection{Necessary conditions}
\label{sec:IV-0}

The following theorem presents four necessary conditions for $\Sigma_\text{stab}$--informativity for stabilization. 

\begin{theorem}
\label{th:nec_cond}
Suppose that $(A_\text{true},B_\text{true})\in\Sigma_\text{stab}$ and the data $\mathcal{D}$ are \mbox{$\Sigma_\text{stab}$--informative} for stabilization. Let $K$ be such that $A+BK$ is Schur for all \mbox{$(A,B)\in\Sigma_\mathcal{D}\cap\Sigma_\text{stab}$}. Then, the following statements hold:
\begin{enumerate}[label=(\alph*),ref=\ref{th:nec_cond}(\alph*)]
    \item\label{th:nec_cond(b)} $(A_0+B_0K)\mathcal{R}(A,B)=\{0\}$ for all $(A,B)\in\Sigma_\mathcal{D}\cap\Sigma_\text{stab}$ and all $(A_0,B_0)\in\Sigma_\mathcal{D}^0$. 
    \item\label{th:nec_cond(c)} If $\rank X_-<n$, then $\im \begin{bmatrix}
    X_- \\ U_-
    \end{bmatrix}=\im X_- \times \mathbb{R}^m$. 
    \item\label{th:nec_cond(d)} $\im X_+\subseteq \im X_-$.
    \item\label{th:nec_cond(e)} $\im X_-$ is $A$--invariant and contains $\im B$ for all $(A,B)\in\Sigma_\mathcal{D}$.
\end{enumerate}
\end{theorem}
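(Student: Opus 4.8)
The plan is to treat statement~\ref{th:nec_cond(b)} as the technical core and to deduce \ref{th:nec_cond(e)}, \ref{th:nec_cond(d)}, and \ref{th:nec_cond(c)} from it. Throughout, I fix a pair $(A,B)\in\Sigma_\mathcal{D}\cap\Sigma_\text{stab}$ and a homogeneous direction $(A_0,B_0)\in\Sigma_\mathcal{D}^0$, and exploit that the whole line $(A+\alpha A_0,B+\alpha B_0)$ lies in $\Sigma_\mathcal{D}$ for every $\alpha\in\mathbb{R}$, since $\Sigma_\mathcal{D}^0+\Sigma_\mathcal{D}=\Sigma_\mathcal{D}$. Writing $M\coloneqq A+BK$ and $N\coloneqq A_0+B_0K$, the closed-loop matrix along this line is the pencil $M+\alpha N$, with $M$ Schur. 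Two elementary facts are recorded for later use: first, the set $\Sigma_\text{stab}$ is open (its complement is closed by a PBH argument), so $M+\alpha N$ is Schur for all $\alpha$ in an open interval around $0$; second, as $(A_0,B_0)$ ranges over $\Sigma_\mathcal{D}^0$, the block row $\begin{bmatrix}A_0 & B_0\end{bmatrix}$ ranges over exactly those matrices whose rows lie in $\big(\im\begin{bmatrix}X_-\\U_-\end{bmatrix}\big)^\perp$.

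For \ref{th:nec_cond(b)} I would argue that $N$ annihilates the reachable subspace $\mathcal{R}(A,B)=\mathcal{R}(M,B)$. Decomposing the state space along $\mathcal{R}(A,B)$, which is $M$-invariant, the pair $(A,B)$ is controllable on $\mathcal{R}(A,B)$ while the complementary (uncontrollable) dynamics are Schur by stabilizability. The idea is to combine Lemma~\ref{lem:cont_direct}, which guarantees that the perturbed pair stays controllable on the relevant reachable directions for all but finitely many $\alpha$, with Lemma~\ref{lem:A+delta*B}: once the restricted closed loop is Schur for cofinitely many $\alpha$ on a ray, $N$ must act nilpotently there and the spectrum is frozen, which I would then sharpen---using controllability on $\mathcal{R}(A,B)$ and the trace identities behind Lemma~\ref{lem:horn_2.4.P10}---to the exact conclusion $N\mathcal{R}(A,B)=\{0\}$. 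The main obstacle lives exactly here: the perturbed pair need \emph{not} remain stabilizable for large $\alpha$, because an uncontrollable mode can be pushed outside the unit disk, so Lemma~\ref{lem:A+delta*B} cannot be applied to the full line; the argument must be confined to the reachable subspace, and upgrading the nilpotency/frozen-spectrum information to genuine \emph{annihilation} (rather than mere nilpotency of $N$) is the delicate point.

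Granting \ref{th:nec_cond(b)}, the remaining statements follow. Rewriting $(A_0+B_0K)v=\begin{bmatrix}A_0 & B_0\end{bmatrix}\begin{bmatrix}v\\Kv\end{bmatrix}$ and using the row-space characterization above, \ref{th:nec_cond(b)} is equivalent to $\begin{bmatrix}v\\Kv\end{bmatrix}\in\im\begin{bmatrix}X_-\\U_-\end{bmatrix}$ for every $v\in\mathcal{R}(A,B)$; reading off the first block gives $\mathcal{R}(A,B)\subseteq\im X_-$, and in particular $\im B\subseteq\im X_-$ for every stabilizable consistent system. Applying this to the true system and, via openness of $\Sigma_\text{stab}$ together with the scale invariance $\im(tB_0)=\im B_0$, to the homogeneous directions, I obtain $\im B_0\subseteq\im X_-$ for all $(A_0,B_0)\in\Sigma_\mathcal{D}^0$ and hence $\im B\subseteq\im X_-$ for \emph{all} $(A,B)\in\Sigma_\mathcal{D}$, which is half of \ref{th:nec_cond(e)}. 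For the invariance half, $A_0\im X_-=\im(-B_0U_-)\subseteq\im B_0\subseteq\im X_-$ reduces $A$-invariance of $\im X_-$ to $A_\text{true}$-invariance; since $A_\text{true}x(t)=x(t+1)-B_\text{true}u(t)$ with $\im B_\text{true}\subseteq\im X_-$, this in turn reduces to the single requirement $x(T)\in\im X_-$, which is precisely \ref{th:nec_cond(d)} since $\im X_+\subseteq\im X_-$ holds iff $x(T)\in\im X_-$. Establishing $x(T)\in\im X_-$ is carried out by the same perturbation/trajectory machinery (a consistent stabilizable system whose last state escaped $\im X_-$ could not be stabilized by a fixed $K$), and it closes both \ref{th:nec_cond(d)} and the invariance part of \ref{th:nec_cond(e)} at once.

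Finally, \ref{th:nec_cond(c)} follows from \ref{th:nec_cond(e)} by contraposition. If $\rank X_-<n$ but $\im\begin{bmatrix}X_-\\U_-\end{bmatrix}\neq\im X_-\times\mathbb{R}^m$, then there exists $\begin{bmatrix}p\\q\end{bmatrix}\perp\im\begin{bmatrix}X_-\\U_-\end{bmatrix}$ with $q\neq0$; choosing $r\in(\im X_-)^\perp\setminus\{0\}$ (available since $\rank X_-<n$) and setting $A_0\coloneqq rp^\top$, $B_0\coloneqq rq^\top$ yields $(A_0,B_0)\in\Sigma_\mathcal{D}^0$ with $\im B_0=\operatorname{span}(r)\not\subseteq\im X_-$, contradicting the inclusion $\im B_0\subseteq\im X_-$ obtained above. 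I expect essentially all the difficulty to be concentrated in \ref{th:nec_cond(b)}; statements \ref{th:nec_cond(c)}, \ref{th:nec_cond(d)}, and \ref{th:nec_cond(e)} are then bookkeeping around the single identity $\mathcal{R}(A,B)\subseteq\im X_-$ extracted from it.
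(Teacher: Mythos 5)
Your overall architecture coincides with the paper's: part~\ref{th:nec_cond(b)} is the technical core, and the remaining statements are bookkeeping around the inclusion $\mathcal{R}(A,B)\subseteq\im X_-$. The reductions you actually carry out are sound---the equivalence of~\ref{th:nec_cond(b)} with $\begin{bmatrix} v^\top & (Kv)^\top \end{bmatrix}^\top \in \im \begin{bmatrix} X_-^\top & U_-^\top \end{bmatrix}^\top$ for $v\in\mathcal{R}(A,B)$, the openness-plus-scaling argument giving $\im B_0\subseteq\im X_-$ for all $(A_0,B_0)\in\Sigma_\mathcal{D}^0$, and the rank-one contraposition for~\ref{th:nec_cond(c)}. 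The problem is that part~\ref{th:nec_cond(b)} itself is never proved. You correctly identify the obstruction (the line $(A+\alpha A_0,B+\alpha B_0)$ can exit $\Sigma_\text{stab}$, so Lemma~\ref{lem:A+delta*B} cannot be applied to the pencil $M+\alpha N$), but the proposed fix---``confine the argument to the reachable subspace'' and then ``sharpen'' nilpotency to annihilation---is left as an aspiration, and neither step is routine. Restricting the closed-loop pencil to $\mathcal{R}(A,B)$ is not even well defined, since $N=A_0+B_0K$ is not known to leave $\mathcal{R}(A,B)$ invariant (that it maps $\mathcal{R}(A,B)$ to zero is the very thing being proved); and no nilpotency statement by itself yields $N\mathcal{R}(A,B)=\{0\}$. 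The paper closes this gap with a construction your sketch lacks: writing a stabilizable, uncontrollable $(A,B)\in\Sigma_\mathcal{D}$ in Kalman form via a nonsingular $V$, so that the first $n_1$ columns of $V^{-1}$ span $\mathcal{R}(A,B)$ with $n_1=\dim\mathcal{R}(A,B)$, it perturbs along $\alpha V^{-1}\begin{bmatrix} Z^\top & 0 \end{bmatrix}^\top \begin{bmatrix} A_0 & B_0 \end{bmatrix}$ with a \emph{free} matrix $Z\in\mathbb{R}^{n_1\times n}$ (the case where $\Sigma_\mathcal{D}$ contains a controllable pair is dispatched separately via Theorem~\ref{th:1}). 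Because this perturbation has image inside $\mathcal{R}(A,B)$, the perturbed pair stays in $\Sigma_\mathcal{D}$, keeps the lower-right Schur block $A_{22}$ intact, and by Lemma~\ref{lem:cont_direct} remains stabilizable for all but finitely many $\alpha$; Lemma~\ref{lem:A+delta*B}, applied to the top-left closed-loop block $A_{11}+B_1F+\alpha ZN$ (where $\begin{bmatrix} F & G \end{bmatrix}=KV^{-1}$ and $\begin{bmatrix} N & M \end{bmatrix}=(A_0+B_0K)V^{-1}$), then shows that $ZN$ is nilpotent for \emph{every} $Z$. The decisive last move is the choice $Z=N^\top$: a symmetric nilpotent matrix vanishes, so $N=0$, which is exactly $(A_0+B_0K)\mathcal{R}(A,B)=\{0\}$. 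Without this device, or an equivalent one, your core claim is unsupported.

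There is a second, smaller gap. You correctly reduce~\ref{th:nec_cond(d)} and the invariance half of~\ref{th:nec_cond(e)} to the single claim $x(T)\in\im X_-$, but you justify that claim only by appeal to ``the same perturbation/trajectory machinery,'' which is not an argument. The paper's proof of this step is deterministic, not perturbative: from $\mathcal{R}(A,B)\subseteq\im X_-$ and the trajectory formula, $A^t x(0)\in\im X_-$ for all $t\in[0,T-1]$; part~\ref{th:nec_cond(c)} gives $\rank\begin{bmatrix} X_-^\top & U_-^\top \end{bmatrix}=r+m\leq T$ with $r=\rank X_-$, hence $T-1\geq r$; and Lemma~\ref{lem:imX_-imX_+} (a Cayley--Hamilton-type counting argument) then yields $A^T x(0)\in\im X_-$ and so $x(T)\in\im X_-$. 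Note that this uses~\ref{th:nec_cond(c)} \emph{before}~\ref{th:nec_cond(d)}, whereas your plan proves~\ref{th:nec_cond(c)} last; since your contraposition for~\ref{th:nec_cond(c)} needs only $\im B_0\subseteq\im X_-$, the ordering is repairable, but the dimension count and Lemma~\ref{lem:imX_-imX_+} must enter somewhere, and in your proposal they never do.
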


The following remark makes a comparison between the necessary conditions in Theorem \ref{th:nec_cond} and the necessary conditions for data informativity without prior knowledge. 

\begin{remark}
It was shown in \cite[Lem. 15]{van2020data} that if the data $\mathcal{D}$ are informative for stabilization and $K$ is a stabilizing gain for all system in $\Sigma_\mathcal{D}$, then $A_0+B_0K=0$ for all $(A_0,B_0)\in\Sigma_\mathcal{D}^0$. Here, we see that if the data are \mbox{$\Sigma_\text{stab}$--informative} for stabilization, then this condition is relaxed to that of statement (a). Now, one can observe that if $\Sigma_\mathcal{D}$ contains a controllable system, then statement (a) implies \mbox{$A_0+B_0K=0$}, which agrees with the result of Theorem~\ref{th:1}. Moreover, we recall from Proposition~\ref{prop:1} that, without using prior knowledge, a necessary condition for the informativity of the data for stabilization is that $\rank X_-=n$. In that case, statements (c) and (d) obviously hold since $\im X_-=\mathbb{R}^n$. However, these statements, along with (b), are nontrivial in case the data are not informative for stabilization, but \mbox{$\Sigma_\text{stab}$--informative} for stabilization. 
\end{remark}

To prove Theorem \ref{th:nec_cond}, we need some intermediate results presented next. 

\begin{lemma}
\label{lem:imker_all}
Let $\mathcal{V} \subset \mathbb{R}^n$ be a proper subspace. Let $\hat{N}\in\mathbb{R}^{n\times m}$, $N_0\in\mathbb{R}^{r\times m}$, and $\varepsilon>0$. Define
\begin{equation}
\mathcal{N}\coloneqq\set{\hat{N}+YN_0}{Y\in\mathbb{R}^{n\times r},\|Y\|\leq \varepsilon}.
\end{equation}
Then, $\im N\subseteq\mathcal{V}$ for all $N\in\mathcal{N}$ \emph{if and only if} $\im \hat{N}\subseteq \mathcal{V}$ and $N_0=0$.  
\end{lemma}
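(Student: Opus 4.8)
The plan is to prove the two implications separately, with the reverse direction being immediate and all the work concentrated in the forward direction. For the ``if'' part, note that when $N_0 = 0$ the set $\mathcal{N}$ collapses to the single matrix $\hat{N}$, so the hypothesis $\im\hat{N}\subseteq\mathcal{V}$ directly gives $\im N\subseteq\mathcal{V}$ for every $N\in\mathcal{N}$. This requires no further argument.

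For the ``only if'' part, I would first extract $\im\hat{N}\subseteq\mathcal{V}$ by choosing $Y=0$, which is an admissible choice since $\|0\|=0\leq\varepsilon$. The next step is to isolate the contribution of $YN_0$: for any $Y$ with $\|Y\|\leq\varepsilon$, both $\hat{N}+YN_0$ and $\hat{N}$ have image contained in the subspace $\mathcal{V}$, so by subtracting (applied columnwise, i.e. $(\hat{N}+YN_0)w - \hat{N}w = YN_0 w$ for every $w$) one concludes $\im(YN_0)\subseteq\mathcal{V}$. Since the image of a matrix is invariant under positive scaling, the norm bound $\|Y\|\leq\varepsilon$ is no restriction here, and therefore $\im(YN_0)\subseteq\mathcal{V}$ holds for \emph{every} $Y\in\mathbb{R}^{n\times r}$.

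The crux is then to turn the freedom in $Y$ into a contradiction with the properness of $\mathcal{V}$. I would argue by contraposition on $N_0$: suppose $N_0\neq 0$, so some column $z\coloneqq N_0 e_j\in\mathbb{R}^r$ is nonzero. The key observation is that as $Y$ ranges over $\mathbb{R}^{n\times r}$, the vector $YN_0 e_j = Yz$ ranges over all of $\mathbb{R}^n$; concretely, for any prescribed $v\in\mathbb{R}^n$ the rank-one choice $Y = vz^\top/\norm{z}^2$ yields $Yz = v$. Combined with $\im(YN_0)\subseteq\mathcal{V}$, this forces $\mathbb{R}^n\subseteq\mathcal{V}$, contradicting the assumption that $\mathcal{V}$ is a proper subspace. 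Hence $N_0=0$, completing the argument.

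I expect the main (and only mild) obstacle to be the bookkeeping in the isolation step: one must verify that the norm constraint $\|Y\|\leq\varepsilon$ does not obstruct the scaling argument, which is where the strict positivity $\varepsilon>0$ and the scale-invariance of images are both used. Everything else is elementary linear algebra, and no earlier result from the paper is needed.
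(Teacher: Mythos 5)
Your proof is correct and follows essentially the same route as the paper's: both reduce to the inclusion $\im(YN_0)\subseteq\mathcal{V}$ for rank-one choices of $Y$ and then exploit the properness of $\mathcal{V}$. The only cosmetic difference is that the paper argues directly, pairing an arbitrary $\eta\in\mathbb{R}^r$ with a fixed nonzero covector $\xi$ annihilating $\mathcal{V}$ (absorbing the norm bound into the choice of $\xi$), whereas you first discharge the norm constraint via scale-invariance of images and then argue by contraposition, letting $Yz$, with $z$ a nonzero column of $N_0$, sweep out all of $\mathbb{R}^n$.
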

\begin{proof}
The ``if'' part is obvious. To prove the ``only if'' part, we assume that $\im N\subseteq \mathcal{V}$ for all $N\in\mathcal{N}$, i.e.,
\begin{equation}
\label{eq:NimQ1}
\im (\hat{N} +YN_0)\subseteq \mathcal{V}
\end{equation}
for all $Y\in\mathbb{R}^{n\times r}$ with $\|Y\|\leq \varepsilon$. Taking $Y=0$ shows that \mbox{$\im \hat{N}\subseteq \mathcal{V}$}. This, together with \eqref{eq:NimQ1}, implies that
\begin{equation}
\label{eq:NimQ2}
Y\im N_0\subseteq \mathcal{V}
\end{equation}
for all $Y\in\mathbb{R}^{n\times r}$ with $\|Y\|\leq \varepsilon$. Let $\eta\in\mathbb{R}^r$. We take $Y=\xi\eta^\top$ with nonzero $\xi\in\mathbb{R}^n$ satisfying $\xi^\top\mathcal{V}=\{0\}$ and $\|\xi\eta^\top\|\leq \varepsilon$. We substitute this in \eqref{eq:NimQ2} and premultiply by $\xi^\top$ to have
\begin{equation}
\|\xi\|^2\eta^\top \im N_0\subseteq \xi^\top\mathcal{V}=\{0\}.
\end{equation}
Since $\eta$ was arbitrary, we have $\eta^\top N_0=0$ for all $\eta\in\mathbb{R}^n$, which implies $N_0=0$. 
\end{proof}

\begin{lemma}
\label{lem:imX_-imX_+}
Let $\mathcal{V}\subseteq\mathbb{R}^n$ be a subspace of dimension $r\leq n$. Let $M\in\mathbb{R}^{n\times n}$ and $v\in\mathcal{V}$. If $M^kv\in\mathcal{V}$ for all $k\in[1,r]$, then $M^kv\in\mathcal{V}$ for all $k\in\mathbb{N}$. 
\end{lemma}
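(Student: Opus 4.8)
The plan is to reduce the infinite family $\{M^kv\}_{k\in\mathbb{N}}$ to the $M$-cyclic subspace generated by $v$ and to show that this subspace already sits inside $\mathcal{V}$. First I would note that the hypothesis, together with $v\in\mathcal{V}$, gives $M^kv\in\mathcal{V}$ for every $k\in\{0,1,\ldots,r\}$. Let $j$ be the smallest nonnegative integer for which $M^jv$ lies in $\operatorname{span}\{v,Mv,\ldots,M^{j-1}v\}$; equivalently, $v,Mv,\ldots,M^{j-1}v$ are linearly independent while $v,Mv,\ldots,M^jv$ are not. Such a $j$ exists because any collection of more than $n$ vectors in $\mathbb{R}^n$ is linearly dependent.

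The main step is to bound $j\leq r$. Suppose, for contradiction, that $j\geq r+1$. By minimality of $j$, the vectors $v,Mv,\ldots,M^{j-1}v$ are linearly independent, so in particular the $r+1$ vectors $v,Mv,\ldots,M^rv$ are linearly independent. However, by the first step these all lie in $\mathcal{V}$, which has dimension $r$, and an $r$-dimensional space cannot contain $r+1$ linearly independent vectors. This contradiction yields $j\leq r$.

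It then remains to observe that $\mathcal{W}\coloneqq\operatorname{span}\{v,Mv,\ldots,M^{j-1}v\}$ is $M$-invariant: $M$ sends $M^iv$ to $M^{i+1}v\in\mathcal{W}$ for $i<j-1$, and sends $M^{j-1}v$ to $M^jv$, which lies in $\mathcal{W}$ by the defining property of $j$. Hence $M^kv\in\mathcal{W}$ for all $k\in\mathbb{N}$ by a routine induction on $k$. Since $j\leq r$, all generators $v,\ldots,M^{j-1}v$ belong to $\mathcal{V}$ by the first step, so $\mathcal{W}\subseteq\mathcal{V}$ and therefore $M^kv\in\mathcal{V}$ for every $k\in\mathbb{N}$, as claimed.

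I expect the only delicate point to be the dimension-counting in the second paragraph: one must invoke the hypothesis precisely at the index $r$ (so that all of $v,\ldots,M^rv$, and not merely $v,\ldots,M^{r-1}v$, are known to lie in $\mathcal{V}$) in order to derive the contradiction. The cyclic-subspace invariance and the final inclusion are then routine.
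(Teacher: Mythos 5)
Your proof is correct, and it takes a recognizably different route from the paper's, though both rest on the same dimension count: the $r+1$ vectors $v,Mv,\ldots,M^rv$ all lie in the $r$-dimensional space $\mathcal{V}$, hence are linearly dependent. The paper deploys this pigeonhole \emph{inside} an induction: assuming $M^kv\in\mathcal{V}$ for all $k\in[1,j]$ with $j\geq r$, it extracts a dependency $M^iv\in\im\begin{bmatrix} v & \cdots & M^{i-1}v\end{bmatrix}$ and multiplies it by $M^{j-i+1}$, landing $M^{j+1}v$ in the span of $M^{j-i+1}v,\ldots,M^jv$, which lies in $\mathcal{V}$ by the inductive hypothesis; the dependency is re-extracted at every step, and neither linear independence nor minimality of the dependency index is ever needed. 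You instead apply the pigeonhole once, to bound the \emph{first} dependency index by $j\leq r$, and then confine the entire trajectory to the fixed cyclic (Krylov) subspace $\mathcal{W}=\operatorname{span}\{v,\ldots,M^{j-1}v\}\subseteq\mathcal{V}$, whose $M$-invariance makes the remaining induction trivial. Your organization buys a slightly stronger structural conclusion---the whole sequence $\{M^kv\}_{k\in\mathbb{N}}$ sits in a single $M$-invariant subspace of $\mathcal{V}$ of dimension at most $r$---at the cost of establishing independence and minimality; the paper's shifting trick is lighter per step but proves only the stated membership. One cosmetic remark: when $v=0$ your $j$ equals $0$, so the phrase ``$M$ sends $M^{j-1}v$ to $M^jv$'' is vacuous, but then $\mathcal{W}=\{0\}$ and the claim is trivial, so this is not a gap.
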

\begin{proof}
Suppose that $M^kv\in\mathcal{V}$ for all $k\in[1,r]$. We use induction to show that $M^kv\in\mathcal{V}$ for all $k\in\mathbb{N}$. Let $j\geq r$ be such that $M^{k} v\in \mathcal{V}$ for all $k\in[1,j]$. Since the dimension of $\mathcal{V}$ is equal to $r$, the matrix $\begin{bmatrix}
v & Mv & \cdots & M^{j} v
\end{bmatrix}$ is of rank at most $r$. Since the number of columns of this matrix is larger than $r$, there exists $i\in[1,j]$ such that
\begin{equation}
M^i v\in\im \begin{bmatrix}
v & Mv & \cdots & M^{i-1} v
\end{bmatrix}.
\end{equation}
Multiply this from left by $M^{j-i+1}$ to have
\begin{equation}
M^{j+1} v\in\im \begin{bmatrix}
M^{j-i+1}v & M^{j-i+2}v & \cdots & M^j v
\end{bmatrix}\subseteq \mathcal{V}.
\end{equation}
Therefore, $M^{k} v\in \mathcal{V}$ for all $k\in[1,j+1]$, which completes the proof. 
\end{proof}

Lemmas \ref{lem:imker_all} and \ref{lem:imX_-imX_+} can now be used to prove Theorem \ref{th:nec_cond}. 

\textit{Proof of Theorem \ref{th:nec_cond}:}
(a) Suppose that $\Sigma_\mathcal{D}\cap\Sigma_\text{cont}$ is nonempty. Since the data are \mbox{$\Sigma_\text{stab}$--informative} for stabilization, they are also $\Sigma_\text{cont}$--informative for stabilization. Thus, we use Theorem~\ref{th:1} to conclude that the data are informative for stabilization and $A+BK$ is Schur for all $(A,B)\in\Sigma_\mathcal{D}$. It follows from \cite[Lem. 15]{van2020data} that we have $A_0+B_0K=0$ for all $(A_0,B_0)\in\Sigma_\mathcal{D}^0$. Therefore, statement~(a) holds. Now, suppose that $\Sigma_\mathcal{D}\cap\Sigma_\text{cont}$ is empty. Let $(A,B)\in\Sigma_\mathcal{D}$ be stabilizable but not controllable. Let $T\in\mathbb{R}^{n\times n}$ be nonsingular such that
\begin{equation}
\label{eq:Kalman_decomp_T}
TAT^{-1}=\begin{bmatrix}
A_{11} & A_{12} \\
0      & A_{22}
\end{bmatrix} \hspace{0.25 cm} \text{and} \hspace{0.25 cm} TB=\begin{bmatrix}
B_1 \\ 0
\end{bmatrix},
\end{equation}
where $(A_{11},B_1)\!\in\!\mathbb{R}^{n_1\times n_1}\times \mathbb{R}^{n_1\times m}$ is controllable, $A_{22}\!\in\!\mathbb{R}^{n_2\times n_2}$ is Schur, and $n_1+n_2=n$. Also, let $(A_0,B_0)\!\in\!\Sigma_\mathcal{D}^0$ and $Z\!\in\!\mathbb{R}^{n_1\times n}$. Define
\begin{equation}
\mathcal{A}(\alpha)\coloneqq A+\alpha T^{-1}\begin{bmatrix}
Z\\0
\end{bmatrix}A_0\text{ and } 
\mathcal{B}(\alpha)\coloneqq B+\alpha T^{-1}\begin{bmatrix}
Z\\0
\end{bmatrix}B_0.
\end{equation}
First, we claim that
\begin{equation}
\label{eq:claim1}
\left(\mathcal{A}(\alpha),\mathcal{B}(\alpha)\right)\in\Sigma_\mathcal{D}\cap\Sigma_\text{stab}
\end{equation}
for all but at most $n_1^2$ values of $\alpha$. To show this, observe that 
\begin{equation}
\mathcal{A}(\alpha) X_-+\mathcal{B}(\alpha)U_-=AX_-+BU_-=X_+.
\end{equation}
This implies that $(\mathcal{A}(\alpha),\mathcal{B}(\alpha))\in\Sigma_\mathcal{D}$ for all $\alpha\in\mathbb{R}$. Now, let the matrices $R_1\in\mathbb{R}^{n_1\times n_1}$ and $R_2\in\mathbb{R}^{n_1\times n_2}$ be defined by $\begin{bmatrix}
R_1 & R_2
\end{bmatrix}=ZA_0T^{-1}$. Observe that
    \begin{equation}
    \begin{split}
    T\mathcal{A}(\alpha)T^{-1}&=\begin{bmatrix}
    A_{11}+\alpha R_1 & A_{12}+\alpha R_2 \\
    0            & A_{22}
    \end{bmatrix}\text{ and} \\ T\mathcal{B}(\alpha)&=\begin{bmatrix}
    B_1+\alpha Z B_0 \\
    0
    \end{bmatrix}.
    \end{split}
    \end{equation}
Since $(A_{11},B_1)$ is controllable, we use Lemma~\ref{lem:cont_direct} with $M=A_{11}$, \linebreak $M_0=R_1$, $N=B_1$, and $N_0=Z B_0$ to conclude that
\begin{equation}
(A_{11}+\alpha R_1,B_1+\alpha Z B_0)\in\Sigma_\text{cont}
\end{equation}
for all but at most $n_1^2$ values of $\alpha$. Moreover, since $A_{22}$ is Schur, we have
\begin{equation}
(T\mathcal{A}(\alpha)T^{-1},T\mathcal{B}(\alpha))\in\Sigma_\text{stab},
\end{equation}
and thus, $(\mathcal{A}(\alpha),\mathcal{B}(\alpha))\in\Sigma_\text{stab}$ for all but at most $n_1^2$ values of $\alpha$. 

Next, we use inclusion \eqref{eq:claim1} to show that statement (a) holds. Let $\mathcal{F}$ be the set of all values of $\alpha$ such that $(\mathcal{A}(\alpha),\mathcal{B}(\alpha))\notin\Sigma_\text{stab}$. By the previous discussion, $\mathcal{F}$ is finite. Since $K$ is a stabilizing gain for all systems within $\Sigma_\mathcal{D}\cap\Sigma_\text{stab}$, we have that
\begin{equation}
\label{eq:beforeSS-1}
\mathcal{A}(\alpha)+\mathcal{B}(\alpha)K=A+BK+\alpha T^{-1}\begin{bmatrix}
Z\\0
\end{bmatrix}(A_0+B_0K)
\end{equation}
is Schur for all $\alpha\in\mathbb{R}\backslash\mathcal{F}$. 
Let $N\in\mathbb{R}^{n\times n_1}$ and \mbox{$M\in\mathbb{R}^{n\times n_2}$} be such that $\begin{bmatrix}
N & M
\end{bmatrix}=(A_0+B_0K)T^{-1}$. Also, define $F\in\mathbb{R}^{m\times n_1}$ and $G\in\mathbb{R}^{m\times n_2}$ by $\begin{bmatrix}
F & G
\end{bmatrix}=KT^{-1}$. We observe that
\begin{equation}
\label{eq:afterSS-1}
T(\mathcal{A}(\alpha)\!+\!\mathcal{B}(\alpha)K)T^{-1}\!\!=\!\!\begin{bmatrix}
    \!A_{11}\!+\!B_1F\!+\!\alpha Z N \!\!&\!\! \!A_{12}\!+\!B_1G\!+\!\alpha ZM \\
    0            \!\!&\!\! A_{22}
    \end{bmatrix}\!.
\end{equation}
Hence, $A_{11}+B_1F+\alpha Z N$ is Schur for all $\alpha\in\mathbb{R}\backslash\mathcal{F}$. It follows now from Lemma~\ref{lem:A+delta*B} that $A_{11}+B_1F$ is Schur and $ZN$ is nilpotent. Note that this argument holds for all \mbox{$Z\in\mathbb{R}^{n_1\times n}$} and all $(A_0,B_0)\in\Sigma_\mathcal{D}^0$. Take $Z=N^\top$. Since $N^\top N$ is symmetric and nilpotent, we have $N=0$. Hence, for any $(A_0,B_0)\in\Sigma_\mathcal{D}^0$ we have \mbox{$A_0+B_0K=\begin{bmatrix}
0 & M
\end{bmatrix}T$}. This implies that
\begin{equation}
\begin{split}
&(A_0+B_0K)\im\begin{bmatrix}
B & AB & \cdots & A^{n-1}B
\end{bmatrix}\\
&=\begin{bmatrix}
0 & M
\end{bmatrix}T\im\begin{bmatrix}
B & AB & \cdots & A^{n-1}B
\end{bmatrix}\\
&=\begin{bmatrix}
0 & M
\end{bmatrix}\im\begin{bmatrix}
B_1 & A_{11}B_1 & \cdots & A^{n-1}_{11}B_1 \\
0   & 0         & \cdots & 0
\end{bmatrix}=\{0\}.
\end{split}
\end{equation}
Therefore, statement (a) holds.

(b) To prove this part, first, we claim that 
\begin{equation}
\label{eq:R(A,B)}
\mathcal{R}(A,B)\subseteq\im X_-\text{ for all } (A,B)\in\Sigma_\mathcal{D}\cap\Sigma_\text{stab}.
\end{equation}
To show this, let $A_0 \in \mathbb{R}^{n \times n}$ be such that \mbox{$\ker A_0 = \im X_-$}. Observe that $(A_0,B_0)\in\Sigma^0_\mathcal{D}$ with \mbox{$B_0=0$}. It follows from part~(a) that \mbox{$A_0\mathcal{R}(A,B)=\{0\}$}. This implies that \mbox{$\mathcal{R}(A,B)\subseteq \ker A_0=\im X_-$}. Now, let $\xi\in\mathbb{R}^{n}$ and $\eta\in\mathbb{R}^{m}$ be such that \mbox{$\begin{bmatrix}
\xi^\top & \eta^\top
\end{bmatrix}^\top\in\ker\begin{bmatrix}
X_-^\top & U_-^\top
\end{bmatrix}$}. Since $(A_\text{true},B_\text{true})\in\Sigma_\mathcal{D}$, for every $Y\in\mathbb{R}^{n}$  we have \mbox{$(A_\text{true}+Y\xi^\top,B_\text{true}+Y\eta^\top)\in\Sigma_\mathcal{D}$}. Let $\varepsilon>0$ be small enough such that for every $Y\in\mathbb{R}^{n}$ satisfying $\|Y\|\leq\varepsilon$ we have \mbox{$(A_\text{true}+Y\xi^\top,B_\text{true}+Y\eta^\top)\in\Sigma_\mathcal{D}\cap\Sigma_\text{stab}$}. It follows from \eqref{eq:R(A,B)} that $\im (B_\text{true}+Y\eta^\top)\subseteq\im X_-$ for all $Y\in\mathbb{R}^{n}$ satisfying $\|Y\|\leq\varepsilon$. We use Lemma~\ref{lem:imker_all} with $\hat{N}=B_\text{true}$ and $N_0=\eta^\top$ to conclude that $\eta=0$. Therefore, we have
\begin{equation}
\label{eq:kerX_U_}
\ker\begin{bmatrix}
X_- \\ U_-
\end{bmatrix}^\top=\ker X_-^\top \times \{0\},
\end{equation}
which implies (b). 

(c) To prove this part, it suffices to show that $x(T)\in\im X_-$. If $X_-$ has full row rank, then this condition is obviously satisfied. Suppose that $\rank X_-=r<n$. It follows from part~(b) that \mbox{$T\geq r+m$}. Let $(A,B)\in\Sigma_\mathcal{D}\cap\Sigma_\text{stab}$. Observe that we have
\begin{equation}
\label{eq:state_formula}
x(t)=A^tx(0)+\sum_{k=0}^{t-1} A^kBu(t-k-1)
\end{equation}
for all $t\in[1,T]$. It follows from \eqref{eq:R(A,B)} that the last term satisfies
\begin{equation}
\sum_{k=0}^{t-1} A^kBu(t-k-1)\in\mathcal{R}(A,B)\subseteq \im X_- \text{ for all } t\in[1,T].
\end{equation}
Since $x(t)\in\im X_-$ for all $t\in[1,T-1]$, we have $A^t x(0)\in \im X_-$ for all $t\in[0,T-1]$. As $T-1\geq r$, we use Lemma~\ref{lem:imX_-imX_+} with $\mathcal{V}=\im X_-$ and $M=A$ to conclude that $A^T x(0)\in \im X_-$. Now, it follows from \eqref{eq:state_formula} with $t=T$ that $x(T)\in\im X_-$. 

(d) If $X_-$ has full row rank, then (d) obviously holds. Now, suppose that $X_-$ does not have full row rank. Let $(A,B)\in\Sigma_\mathcal{D}$. First, we show that $\im B\subseteq\im X_-$. For this, observe that since both pairs $(A_\text{true},B_\text{true})$ and $(A,B)$ belong to $\Sigma_\mathcal{D}$, we have
\begin{equation}
\label{eq:AB-ABtrue}
(A-A_\text{true})X_- + (B-B_\text{true})U_-=0.
\end{equation}
Based on \eqref{eq:kerX_U_} and \eqref{eq:AB-ABtrue}, we have $B=B_\text{true}$. It follows now from~\eqref{eq:R(A,B)} that $\im B=\im B_\text{true}\subseteq\mathcal{R}(A_\text{true},B_\text{true})\subseteq\im X_-$. To show that $\im X_-$ is $A$--invariant, we observe from part (c) that \mbox{$A\im X_-+B\im U_-=\im X_+\subseteq\im X_-$}.
Since $\im B\subseteq \im X_-$, we have $A\im X_-\subseteq\im X_-$, which completes the proof.  \hfill \QED

\subsection{Necessary and sufficient conditions with full rank state data}
\label{sec:IV-A}

In case $X_-$ has full row rank, the following theorem shows that data-driven stabilization using stabilizability as prior knowledge is equivalent to data-driven stabilization without prior knowledge. 

\begin{theorem}
\label{th:12}
Suppose that $(A_\text{true},\!B_\text{true})\!\in\!\Sigma_\text{stab}$ and $\rank X_-\!=\!n$. Then, the the following statements are equivalent: 
\begin{enumerate}[label=(\alph*),ref=\ref{th:12}(\alph*)]
    \item The data $\mathcal{D}$ are $\Sigma_\text{stab}$--informative for stabilization.
    \item The data $\mathcal{D}$ are informative for stabilization.
\end{enumerate}
\end{theorem}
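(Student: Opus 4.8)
The plan is to prove Theorem~\ref{th:12} by establishing the two implications separately. The implication (b)$\Rightarrow$(a) is immediate: if $K$ stabilizes $A+BK$ for all $(A,B)\in\Sigma_\mathcal{D}$, then in particular it stabilizes all $(A,B)\in\Sigma_\mathcal{D}\cap\Sigma_\text{stab}$, since the latter is a subset of the former. So the entire content lies in (a)$\Rightarrow$(b). The key structural observation is that this is the stabilizability analogue of Theorem~\ref{th:1}: we are given that $K$ stabilizes every \emph{stabilizable} data-consistent system, and we must upgrade this to stabilizing \emph{every} data-consistent system. Since the full-row-rank assumption $\rank X_- = n$ makes $\im X_- = \mathbb{R}^n$, many of the subtleties of the low-rank case (Section~\ref{sec:IV-B}) disappear, and I expect the proof to closely parallel the perturbation-and-continuity argument used for Theorem~\ref{th:1}.

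\emph{First I would} set up the core reduction. Suppose the data are $\Sigma_\text{stab}$--informative with stabilizing gain $K$, and let $(\bar A,\bar B)\in\Sigma_\mathcal{D}$ be arbitrary; the goal is to show $\bar A + \bar B K$ is Schur. Writing $\bar A = A_\text{true} + A_0$ and $\bar B = B_\text{true} + B_0$ with $(A_0,B_0)\in\Sigma_\mathcal{D}^0$, the affine family $(A_\text{true}+\alpha A_0,\, B_\text{true}+\alpha B_0)$ lies entirely in $\Sigma_\mathcal{D}$ for all $\alpha\in\mathbb{R}$, exactly as in the proof of Theorem~\ref{th:1}. The crucial point is that $(A_\text{true},B_\text{true})\in\Sigma_\text{stab}$ by hypothesis, and I would argue that the whole family remains stabilizable for all but finitely many $\alpha$. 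Here is where the full-rank assumption does its work: by Theorem~\ref{th:nec_cond}, $\Sigma_\text{stab}$--informativity forces $A_0 + B_0 K = 0$ for the relevant directions (via statement~(a) combined with $\im X_- = \mathbb{R}^n$, so that $\mathcal{R}(A,B)$ spans everything when a controllable consistent system exists). Thus $\bar A + \bar B K = (A_\text{true}+B_\text{true}K) + (A_0+B_0K) = A_\text{true}+B_\text{true}K$ up to a perturbation that vanishes along the reachable directions.

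\emph{The cleaner route} I would actually pursue is to invoke the already-proven machinery directly. Because $\rank X_- = n$, Proposition~\ref{prop:1} tells us that informativity for stabilization (statement~(b)) is governed by the LMI~\eqref{eq:dd_lmi}, whose feasibility depends only on $X_-, X_+$ and not on any convexity of the system set. I would show that $\Sigma_\text{stab}$--informativity implies $A_0+B_0K=0$ for \emph{all} $(A_0,B_0)\in\Sigma_\mathcal{D}^0$ by adapting the perturbation argument in the proof of Theorem~\ref{th:nec_cond}(a): one perturbs $(A_\text{true},B_\text{true})$ along directions in $\ker[X_-^\top\; U_-^\top]^\top$ and uses Lemma~\ref{lem:A+delta*B} to pass stability from a cofinite set of $\alpha$ to all $\alpha$, concluding nilpotency and hence that the perturbation direction is trivial. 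Once $A_0+B_0K=0$ for all $(A_0,B_0)\in\Sigma_\mathcal{D}^0$, every $(\bar A,\bar B)\in\Sigma_\mathcal{D}$ satisfies $\bar A+\bar B K = A_\text{true}+B_\text{true}K$, which is Schur since $(A_\text{true},B_\text{true})\in\Sigma_\mathcal{D}\cap\Sigma_\text{stab}$. This proves (a)$\Rightarrow$(b).

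\emph{The main obstacle} I anticipate is justifying that $A_0 + B_0 K = 0$ holds for \emph{every} direction $(A_0,B_0)\in\Sigma_\mathcal{D}^0$, rather than only for directions that keep the perturbed system stabilizable. In the controllable-prior case (Theorem~\ref{th:1}) this followed cleanly because perturbing a controllable pair stays controllable generically (Lemma~\ref{lem:cont_direct}), whereas stabilizability is a weaker, non-generic-in-the-wrong-direction property: a perturbation could in principle destabilize the uncontrollable mode. The full-rank hypothesis $\rank X_- = n$ is precisely what rescues the argument, since it guarantees (via Theorem~\ref{th:nec_cond}, especially statements~(c)--(e) which become vacuous or force $\im X_- = \mathbb{R}^n$) that there is no uncontrollable mode hiding outside the reachable subspace to worry about; effectively the low-rank pathology that makes stabilizability genuinely weaker than controllability (as in Example~\ref{ex:1}) cannot occur. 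I would therefore spend the most care verifying that, under $\rank X_-=n$, the set $\Sigma_\mathcal{D}^0$ and the stabilizing-direction analysis collapse to the same conclusion as the no-prior-knowledge case, thereby recovering the equivalence with Proposition~\ref{prop:1}.
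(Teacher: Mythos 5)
Your reduction to showing that the \emph{given} gain $K$ stabilizes all of $\Sigma_\mathcal{D}$ hinges on one pivotal claim: that for an arbitrary direction $(A_0,B_0)\in\Sigma_\mathcal{D}^0$, the perturbed pairs $(A_\text{true}+\alpha A_0,\,B_\text{true}+\alpha B_0)$ remain stabilizable for all but finitely many $\alpha$ --- only then is the closed loop Schur on a cofinite set and Lemma~\ref{lem:A+delta*B} applicable. You never prove this, and the mechanism you offer for it is incorrect. You assert that $\rank X_-=n$ guarantees (via Theorem~\ref{th:nec_cond}, statements (c)--(e)) that ``there is no uncontrollable mode hiding outside the reachable subspace.'' That is false: take $A_\text{true}=\diag(0,0.5)$, $B_\text{true}=(1,0)^\top$, $x(0)=(1,1)^\top$; this system is stabilizable but uncontrollable, yet two steps of data give $X_-=\begin{bmatrix}1 & 0\\ 1 & 0.5\end{bmatrix}$ of full row rank, because the unreachable mode is excited by the initial condition. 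Full-rank state data does not make $\mathcal{R}(A,B)$ equal to $\mathbb{R}^n$, and stabilizability --- unlike controllability (Lemma~\ref{lem:cont_direct}) --- is \emph{not} generically preserved along lines: in Example~\ref{ex:1} the data-consistent family loses stabilizability on the entire set $|\beta|\geq 1$, not on a finite set. So the crux of the implication (a)$\Rightarrow$(b) is left unproven. (A secondary non sequitur: you conclude from Lemma~\ref{lem:A+delta*B} ``nilpotency and hence that the perturbation direction is trivial''; nilpotency of $A_0+B_0K$ does not imply $A_0+B_0K=0$. This particular flaw is harmless, since the lemma already yields Schurness at $\alpha=1$, which is all you need --- but it signals that the conclusion $A_0+B_0K=0$ is not what your argument actually delivers.)

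For comparison, the paper sidesteps any genericity-of-stabilizability claim: it does \emph{not} show that the same $K$ works. Instead it builds a modified gain $\hat{K}$ that agrees with $K$ on $\mathcal{R}(A_\text{true},B_\text{true})$ and equals the certainty-equivalence gain $U_-X_-^\dagger$ on a complement $\mathcal{S}$; then, using $A_0=-B_0U_-X_-^\dagger$ (this algebraic identity, tying the perturbation of $A$ to that of $B$, is where $\rank X_-=n$ actually enters) together with statement (a) of Theorem~\ref{th:nec_cond}, it gets $A_0+B_0\hat{K}=0$ for \emph{every} direction, so $A+B\hat{K}=A_\text{true}+B_\text{true}\hat{K}$, whose Schurness follows from the Kalman decomposition. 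Your same-$K$ claim is, as it happens, true, and your route could be repaired --- but only by an argument you do not give: writing $A_0=-B_0U_-X_-^\dagger$, any PBH certificate $(z,w)$ with $|z|\geq 1$ for non-stabilizability of a perturbed pair forces $w^\top(A_\text{true}+B_\text{true}U_-X_-^\dagger)=zw^\top$, i.e., $(z,w)$ is an unstable left eigenpair of a \emph{fixed} matrix, and stabilizability of $(A_\text{true},B_\text{true})$ then pins the bad values of $\alpha$ down to finitely many. Absent either that argument or the paper's $\hat{K}$ construction, the proof does not go through.
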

\begin{proof} 
It is evident that (b) implies (a). To prove that (a) implies (b), first, we assume that $(A_\text{true},B_\text{true})$ is controllable. Since the data $\mathcal{D}$ are $\Sigma_\text{stab}$--informative for stabilization, they are also \mbox{$\Sigma_\text{cont}$--informative} for stabilization. Hence, it follows from Theorem~\ref{th:1} that the data are informative for stabilization. 

Next, we assume that $(A_\text{true},B_\text{true})$ is uncontrollable. Let $K$ be a stabilizing gain for all the systems in $\Sigma_\mathcal{D} \cap \Sigma_\text{stab}$. We show that there exists a $\hat{K}$ that stabilizes all the systems in $\Sigma_\mathcal{D}$. For this, let $\mathcal{S}$ be a subspace satisfying \mbox{$\mathcal{S}\oplus \mathcal{R}(A_\text{true},B_\text{true}) =\mathbb{R}^n $}, where $\oplus$ denotes direct sum. Thus, every $v\in\mathbb{R}^n$ can be written uniquely as \mbox{$v=v_1+v_2$} with $v_1\in\mathcal{R}(A_\text{true},B_\text{true})$ and $v_2\in\mathcal{S}$. We define $\hat{K}$ as the matrix satisfying $\hat{K}v=Kv_1+U_-X_-^\dagger v_2$ for all $v\in\mathbb{R}^n$. Let $(A,B)\in\Sigma_\mathcal{D}$. Take $(A_0,B_0)\in\Sigma_\mathcal{D}^0$ such that $A=A_\text{true}+A_0$ and $B=B_\text{true}+B_0$.  We show that $A_0 + B_0\hat{K}=0$. Since $X_-$ has full row rank, we have $A_0 = -B_0 U_- X_-^\dagger$. Hence, we have
\begin{equation}
(A_0 + B_0\hat{K}) v = A_0 v + B_0 K v_1 + B_0 U_- X_-^\dagger v_2 = (A_0 + B_0 K) v_1.
\end{equation}
It follows from Theorem~\ref{th:nec_cond(b)} that $(A_0 + B_0 K) v_1=0$. Thus, we have $(A_0 + B_0\hat{K}) v=0$ for all $v\in\mathbb{R}^n$, which implies that \mbox{$A_0 + B_0\hat{K}=0$}. Therefore, we have
\begin{equation}
A+B\hat{K}=A_\text{true}+A_0+(B_\text{true}+B_0)\hat{K}=A_\text{true}+B_\text{true}\hat{K}.
\end{equation}
Hence, what remains to be proven is that $A_\text{true}+B_\text{true}\hat{K}$ is Schur. Let $\hat{T}\in\mathbb{R}^{n\times n}$ be such that
\begin{equation}
\label{eq:Kalman_decomp_T}
\hat{T}A_\text{true}\hat{T}^{-1}=\begin{bmatrix}
\hat{A}_{11} & \hat{A}_{12} \\
0      & \hat{A}_{22}
\end{bmatrix} \hspace{0.25 cm} \text{and} \hspace{0.25 cm} \hat{T}B_\text{true}=\begin{bmatrix}
\hat{B}_1 \\ 0
\end{bmatrix},
\end{equation}
where $(\hat{A}_{11},\hat{B}_1)\in\mathbb{R}^{n_1\times n_1}\times \mathbb{R}^{n_1\times m}$ is controllable, \mbox{$\hat{A}_{22}\in\mathbb{R}^{n_2\times n_2}$} is Schur, and $n_1+n_2=n$. We note that the first $n_1$ columns of $\hat{T}^{-1}$ span $\mathcal{R}(A_\text{true},B_\text{true})$ and the rest of its columns span $\mathcal{S}$. Let $F\in\mathbb{R}^{m\times n_1}$ and $G\in\mathbb{R}^{m\times n_2}$ be defined by $\begin{bmatrix}
F & G
\end{bmatrix}=K\hat{T}^{-1}$. Since $A_\text{true}+B_\text{true}K$ is Schur, we have that $\hat{A}_{11}+\hat{B}_1F$ is Schur. Now, we observe that
\begin{equation}
T(A_\text{true}+B_\text{true}\hat{K})T^{-1}=\begin{bmatrix}
\hat{A}_{11}+\hat{B}_1F & \hat{A}_{12}+\hat{B}_1\hat{G} \\
0 & \hat{A}_{22}
\end{bmatrix},
\end{equation}
where $\hat{G}\in\mathbb{R}^{m\times n_2}$ satisfies $\begin{bmatrix}
F & \hat{G}
\end{bmatrix}=\hat{K}\hat{T}^{-1}$. Since $A_{11}+B_1F$ and $A_{22}$ are both Schur, we have that \mbox{$A_\text{true}+B_\text{true}\hat{K}$} is Schur. Therefore, $A+B\hat{K}$ is Schur. This implies that the \mbox{data $\mathcal{D}$} are informative for stabilization. 
\end{proof}

Due to Theorem \ref{th:12}, in case the state data $X_-$ is of full row rank, stabilizability as prior knowledge does not help in relaxing the conditions for data-driven stabilization. Hence, in this case, a stabilizing feedback gain may still be computed using Proposition~\ref{prop:1}. Nevertheless, if the true system is not controllable, then collecting full rank state data might not be possible. In fact, this depends on the initial condition of the system. For instance, if an uncontrollable system is initially at rest, $x(0)=0$, then $X_-$ will not have full row rank, no matter what input signal is applied to the system. This motivates studying the case where the state data is rank-deficient, which is the topic of the next section.

\subsection{Necessary and sufficient conditions with rank-deficient state data}
\label{sec:IV-B}

In case $X_-$ does not have full row rank, the data are not informative for stabilization, and Proposition~\ref{prop:1} fails to provide a stabilizing feedback. Interestingly, it turns out that in this case, one may be able to find a stabilizing feedback from data by incorporating the prior knowledge on stabilizability. 

\begin{theorem}
\label{th:3}
Suppose that $(A_\text{true},\!B_\text{true})\!\in\!\Sigma_\text{stab}$ and $\rank X_-\!<\!n$. Then, the data $\mathcal{D}$ are $\Sigma_\text{stab}$--informative for stabilization \emph{if and only if} the following conditions hold:
\begin{enumerate}[label=(\alph*),ref=\ref{th:3}(\alph*)]
    \item\label{th:3(a)} $\im X_+\subseteq\im X_-$,
    \item\label{th:3(b)} $\im \begin{bmatrix}
    X_- \\ U_-
    \end{bmatrix}=\im X_- \times \mathbb{R}^m$.
\end{enumerate}
\end{theorem}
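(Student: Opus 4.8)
The plan is to treat the two implications separately; the ``only if'' direction is immediate from the results already proved, and all of the real work lies in the ``if'' direction. For \emph{necessity}, suppose the data are $\Sigma_\text{stab}$--informative for stabilization. Then conditions (a) and (b) are precisely the necessary conditions $\im X_+\subseteq\im X_-$ and $\im\begin{bmatrix}X_-\\U_-\end{bmatrix}=\im X_-\times\mathbb{R}^m$ already established in Theorem~\ref{th:nec_cond} (the latter using the standing hypothesis $\rank X_-<n$). So this direction requires no new argument.

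For \emph{sufficiency}, assume (a) and (b) and write $\mathcal{V}\coloneqq\im X_-$, a proper subspace since $r\coloneqq\rank X_-<n$. First I would extract the structure of $\Sigma_\mathcal{D}$ forced by the two conditions. Taking orthogonal complements, condition (b) is equivalent to the statement that $\xi^\top X_-+\eta^\top U_-=0$ forces $\eta=0$; applying this row-by-row to any $(A_0,B_0)\in\Sigma_\mathcal{D}^0$, which satisfies $A_0X_-+B_0U_-=0$, yields $B_0=0$ and $A_0\mathcal{V}=\{0\}$. Hence $B=B_\text{true}$ is common to all data-consistent systems, and $A=A_\text{true}+A_0$ with $A_0\mathcal{V}=\{0\}$. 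Next, since $\im X_+=\begin{bmatrix}A_\text{true}&B_\text{true}\end{bmatrix}\im\begin{bmatrix}X_-\\U_-\end{bmatrix}=A_\text{true}\mathcal{V}+\im B_\text{true}$ by (b), condition (a) gives $A_\text{true}\mathcal{V}+\im B_\text{true}\subseteq\mathcal{V}$, so $\mathcal{V}$ is $A_\text{true}$--invariant and contains $\im B_\text{true}$.

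I would then fix a nonsingular $S$ with $S\mathcal{V}=\mathbb{R}^r\times\{0\}$. In these coordinates $SA_\text{true}S^{-1}=\begin{bmatrix}A_{11}&A_{12}\\0&A_{22}\end{bmatrix}$ and $SB_\text{true}=\begin{bmatrix}B_1\\0\end{bmatrix}$, and because $A_0\mathcal{V}=\{0\}$ annihilates the first block-column, every $(A,B)\in\Sigma_\mathcal{D}$ takes the form $SAS^{-1}=\begin{bmatrix}A_{11}&\star\\0&\star\end{bmatrix}$, $SB=\begin{bmatrix}B_1\\0\end{bmatrix}$, where the entire second block-column---in particular the $(2,2)$ block---ranges freely over all matrices of the appropriate size as $(A,B)$ ranges over $\Sigma_\mathcal{D}$. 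A PBH argument applied to this block-triangular form shows that such a pair is stabilizable \emph{if and only if} $(A_{11},B_1)$ is stabilizable and the free $(2,2)$ block is Schur; the same argument applied to $(A_\text{true},B_\text{true})$ shows that $(A_{11},B_1)$ is stabilizable (and $A_{22}$ Schur) because $(A_\text{true},B_\text{true})\in\Sigma_\text{stab}$.

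The key observation---and the reason stabilizability as prior knowledge helps---is that for \emph{precisely} the systems in $\Sigma_\mathcal{D}\cap\Sigma_\text{stab}$ the uncontrollable $(2,2)$ block is already Schur, so the feedback need not, and indeed cannot, act on it. I would therefore pick $K_1$ with $A_{11}+B_1K_1$ Schur, which exists since $(A_{11},B_1)$ is stabilizable, set $K\coloneqq\begin{bmatrix}K_1&0\end{bmatrix}S$, and verify that $S(A+BK)S^{-1}$ is block upper-triangular with diagonal blocks $A_{11}+B_1K_1$ and the free $(2,2)$ block---both Schur on $\Sigma_\mathcal{D}\cap\Sigma_\text{stab}$. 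Hence $A+BK$ is Schur for every $(A,B)\in\Sigma_\mathcal{D}\cap\Sigma_\text{stab}$, establishing $\Sigma_\text{stab}$--informativity. The main obstacle is conceptual rather than computational: since $\Sigma_\text{stab}$ is non-convex, one cannot expect a common Lyapunov/LMI certificate as in Proposition~\ref{prop:1}; instead the argument hinges on recognizing that the prior knowledge of stabilizability renders the uncontrollable modes Schur ``for free,'' reducing the problem to stabilizing the fixed, genuinely controllable sub-pair $(A_{11},B_1)$.
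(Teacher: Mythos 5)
Your proposal is correct and follows essentially the same route as the paper: necessity via Theorem~\ref{th:nec_cond}, and sufficiency by passing to coordinates adapted to $\im X_-$, showing every $(A,B)\in\Sigma_\mathcal{D}\cap\Sigma_\text{stab}$ is block upper-triangular with a \emph{common} stabilizable sub-pair $(A_{11},B_1)$ and a Schur $(2,2)$ block, and then stabilizing only that common sub-pair (this is exactly the paper's Lemma~\ref{lem:partition_proof} and the gain $K=\begin{bmatrix}K_1 & K_2\end{bmatrix}S$ of Proposition~\ref{prop:K=[K1 K2]}). The only cosmetic difference is that you identify the common blocks through the parameterization $A=A_\text{true}+A_0$, $B=B_\text{true}$ with $A_0\im X_-=\{0\}$, whereas the paper pins them down by the data-based formula $\begin{bmatrix}A_{11} & B_1\end{bmatrix}=\hat{X}_+\begin{bmatrix}\hat{X}_-^\top & U_-^\top\end{bmatrix}^{\dagger\top}$-type identity in \eqref{eq:partition_formula_A_11_B1}.
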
\vspace{0.25cm}

Theorem~\ref{th:3} provides a full characterization of $\Sigma_\text{stab}$--informativity for stabilization in case $\rank X_-<n$. Before providing a proof, we first also consider the problem of computing a stabilizing feedback gain from $\Sigma_\text{stab}$--informative data. For this, given data $\mathcal{D}$, let \mbox{$r\coloneqq \rank X_-$}, $S\in\mathbb{R}^{n\times n}$ be nonsingular, and \mbox{$\hat{X}_- \in \mathbb{R}^{r \times T}$} be of full row rank such that
\begin{equation}
\label{eq:def:SXhat_-}
SX_-=\begin{bmatrix}
\hat{X}_- \\ 0
\end{bmatrix}.
\end{equation}
These matrices $S$ and $\hat{X}_-$ can be computed, for example, using a QR decomposition of $X_-$. Moreover, let $\hat{X}_+\in\mathbb{R}^{r\times n}$ be defined as
\begin{equation}
\label{eq:def:SXhat_+}
\hat{X}_+\coloneqq\begin{bmatrix}
I_r & 0
\end{bmatrix}SX_+.
\end{equation}

\begin{proposition}
\label{prop:K=[K1 K2]}
Suppose that $(A_\text{true},B_\text{true})\in\Sigma_\text{stab}$, the data $\mathcal{D}$ are $\Sigma_\text{stab}$--informative for stabilization, and \mbox{$\rank X_-<n$}. Then, the following statements hold:
\begin{enumerate}[label=(\alph*),ref=\ref{prop:K=[K1 K2]}(\alph*)]
    \item\label{prop:K=[K1 K2](a)} There exists $\Theta\in\mathbb{R}^{T\times r}$ such that the following LMI is feasible:
\begin{equation}
\label{eq:dd_lmi_stab}
\hat{X}_-\Theta=\Theta^\top \hat{X}_-^\top\ \text{ and }\ \begin{bmatrix}
\hat{X}_-\Theta & \hat{X}_+\Theta \\
\Theta^\top \hat{X}_+^\top & \hat{X}_-\Theta
\end{bmatrix}>0.
\end{equation}
\item\label{prop:K=[K1 K2](b)} Suppose that $\Theta$ satisfies LMI \eqref{eq:dd_lmi_stab}. Let $K=\begin{bmatrix}
K_1 & K_2
\end{bmatrix}S$, where $K_1=U_-\Theta(\hat{X}_-\Theta)^{-1}$ and $K_2\in\mathbb{R}^{m\times (n-r)}$ is arbitrary. Then, $A+BK$ is Schur for all  $(A,B)\in\Sigma_\mathcal{D}\cap \Sigma_\text{stab}$. 
\end{enumerate}
\end{proposition}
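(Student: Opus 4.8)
The plan is to use the necessary conditions of Theorem~\ref{th:nec_cond} to collapse the problem onto the $r$-dimensional subspace $\im X_-$, where the data behave like ordinary full-rank data, and then to invoke Proposition~\ref{prop:1}. First I would exploit the coordinate change $S$ from \eqref{eq:def:SXhat_-}: since the data are $\Sigma_\text{stab}$--informative, Theorem~\ref{th:nec_cond(e)} guarantees that $\im X_-$ is $A$--invariant and contains $\im B$ for \emph{every} $(A,B)\in\Sigma_\mathcal{D}$, so in the new coordinates every consistent system is block upper triangular,
\[
SAS^{-1}=\begin{bmatrix} A_{11} & A_{12} \\ 0 & A_{22}\end{bmatrix},\qquad SB=\begin{bmatrix} B_1 \\ 0\end{bmatrix},
\]
with $A_{11}\in\mathbb{R}^{r\times r}$. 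Substituting this into $X_+=AX_-+BU_-$ and using $\im X_+\subseteq\im X_-$ from Theorem~\ref{th:nec_cond(d)} (so that the lower block rows of $SX_+$ vanish), I would obtain the reduced data equation $\hat{X}_+=A_{11}\hat{X}_-+B_1U_-$, with $\hat{X}_-,\hat{X}_+$ as in \eqref{eq:def:SXhat_-}--\eqref{eq:def:SXhat_+}.

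The crucial structural observation is that $(A_{11},B_1)$ is the \emph{same} pair for all consistent systems and that it is stabilizable. Indeed, Theorem~\ref{th:nec_cond(c)} is equivalent (after applying $\diag(S,I_m)$) to $\begin{bmatrix}\hat{X}_-^\top & U_-^\top\end{bmatrix}^\top$ having full row rank $r+m$; hence $\begin{bmatrix}A_{11} & B_1\end{bmatrix}=\hat{X}_+\begin{bmatrix}\hat{X}_- \\ U_-\end{bmatrix}^\dagger$ is uniquely determined, equals the reduced true pair, and is stabilizable because $(A_\text{true},B_\text{true})$ is stabilizable and similarity preserves stabilizability (apply the PBH test to the triangular form, which also forces $A_{22}$ to inherit no unstable modes). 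Consequently the reduced data $(\hat{X}_-,U_-,\hat{X}_+)$ are informative for stabilization in the sense of Definition~\ref{def:1} for an $r$-dimensional system: $\hat{X}_-$ has full row rank and the reduced consistent set is the singleton $\{(A_{11},B_1)\}$, which admits a stabilizing gain. Part (a) then follows by applying Proposition~\ref{prop:1} verbatim to this reduced data, yielding a feasible $\Theta\in\mathbb{R}^{T\times r}$ solving \eqref{eq:dd_lmi_stab}.

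For part (b), the ``moreover'' clause of Proposition~\ref{prop:1}, applied again to the reduced data, shows that $K_1=U_-\Theta(\hat{X}_-\Theta)^{-1}$ (well defined since $\hat{X}_-\Theta>0$ is a diagonal block of the positive definite matrix in \eqref{eq:dd_lmi_stab}) makes $A_{11}+B_1K_1$ Schur. I would then compute the closed loop for $K=\begin{bmatrix}K_1 & K_2\end{bmatrix}S$ with arbitrary $K_2$, obtaining a block upper triangular $S(A+BK)S^{-1}$ with diagonal blocks $A_{11}+B_1K_1$ and $A_{22}$, so that its spectrum is the union of the two. For any $(A,B)\in\Sigma_\mathcal{D}\cap\Sigma_\text{stab}$, stabilizability together with the triangular form forces $A_{22}$ to be Schur, since the feedback leaves this uncontrolled block untouched; thus both diagonal blocks are Schur and $A+BK$ is Schur. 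The main obstacle I anticipate is the structural reduction in the first two paragraphs — establishing that all data-consistent systems share the common block form, that the reduced pair is uniquely identified and stabilizable, and that the unreachable block $A_{22}$ is Schur for every stabilizable consistent system while being invariant under feedback; once these facts are in place, parts (a) and (b) are immediate consequences of Proposition~\ref{prop:1} applied to the reduced data.
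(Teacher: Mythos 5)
Your proposal is correct, and it shares the paper's structural backbone: the paper proves the same block-triangular decomposition you derive (its Lemma~\ref{lem:partition_proof}, obtained there from conditions (a) and (b) of Theorem~\ref{th:3} rather than from Theorem~\ref{th:nec_cond}(d),(e) as you do), including the identity $\begin{bmatrix} A_{11} & B_1 \end{bmatrix}=\hat{X}_+\begin{bmatrix}\hat{X}_-^\top & U_-^\top\end{bmatrix}^{\dagger\top}$ showing the reduced pair is common to all consistent stabilizable systems. Where you genuinely diverge is in how you finish: the paper proves (a) by \emph{explicitly constructing} $\Theta=\begin{bmatrix}\hat{X}_-^\top & U_-^\top\end{bmatrix}^{\dagger\top}\begin{bmatrix}I \\ K_1\end{bmatrix}P$ from a Lyapunov solution $P$ of the reduced closed loop and verifying \eqref{eq:dd_lmi_stab} by Schur complements, and proves (b) by running the reverse Schur-complement computation by hand; you instead observe that $(\hat{X}_-,\hat{X}_+,U_-)$ is itself a legitimate input-state dataset of the $r$-dimensional system $(\hat{A}_{11},\hat{B}_1)$ whose consistent set is the singleton $\{(\hat{A}_{11},\hat{B}_1)\}$ (by the rank condition) and is therefore informative for stabilization, so both (a) and (b) follow by invoking Proposition~\ref{prop:1} as a black box on the reduced data. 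This reduction is valid, but the one step you should make explicit is why the ``verbatim'' application is licensed: either note that the proof of Proposition~\ref{prop:1} uses only the linear consistency relation $\hat{X}_+=\bar{A}\hat{X}_-+\bar{B}U_-$, or verify that the reduced data really are a trajectory of the reduced system --- which they are, since the lower blocks of $Sx(t)$ vanish for $t\in[0,T-1]$ by \eqref{eq:def:SXhat_-} and also for $t=T$ by $\im X_+\subseteq\im X_-$, giving $\hat{x}(t+1)=\hat{A}_{11}\hat{x}(t)+\hat{B}_1u(t)$. In exchange for this extra remark, your route avoids duplicating the Lyapunov and Schur-complement manipulations (they live inside Proposition~\ref{prop:1}) and makes transparent that \eqref{eq:dd_lmi_stab} is nothing but \eqref{eq:dd_lmi} written for the reduced dataset; the paper's inlined construction, on the other hand, is self-contained and exhibits concretely how a feasible $\Theta$ arises from any stabilizing gain of the true system.
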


To prove Theorem~\ref{th:3} and Proposition~\ref{prop:K=[K1 K2]}, we need an auxiliary result presented in the following lemma. 
This lemma shows that if the data satisfy conditions (a) and (b) of Theorem~\ref{th:3}, then $S$ can be used as a state transformation to simultaneously factorize all members of $\Sigma_\mathcal{D}\cap\Sigma_\text{stab}$ into a stabilizable part and an autonomous, stable part. 

\begin{lemma}
\label{lem:partition_proof}
Suppose that the data $\mathcal{D}$ satisfy $\rank X_-<n$. Let $(A,B)\in\Sigma_\mathcal{D}\cap\Sigma_\text{stab}$. If statements (a) and (b) of Theorem~\ref{th:3} hold, then we have
\begin{equation}
\label{eq:partition_proof}
SAS^{-1}=\begin{bmatrix}
A_{11} & A_{12} \\
0 & A_{22}
\end{bmatrix}\ \text{ and }\ SB=\begin{bmatrix}
B_1 \\ 0
\end{bmatrix},
\end{equation}
where the pair $(A_{11},B_1)\in\mathbb{R}^{r\times r}\times\mathbb{R}^{r\times m}$ is stabilizable and \mbox{$A_{22}\in\mathbb{R}^{(n-r)\times (n-r)}$} is Schur. Moreover, we have
\begin{equation}
\label{eq:partition_formula_A_11_B1}
\begin{bmatrix}
A_{11} & B_1
\end{bmatrix}=\hat{X}_+\begin{bmatrix}
\hat{X}_- \\
U_-
\end{bmatrix}^\dagger. 
\end{equation}
\end{lemma}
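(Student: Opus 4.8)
The plan is to use $S$ as a simultaneous state transformation that brings every member of $\Sigma_\mathcal{D}\cap\Sigma_\text{stab}$ into the claimed block form, and then to read off the two assertions. First I would note that, by the definition of $S$ in \eqref{eq:def:SXhat_-} and the full row rank of $\hat{X}_-$, we have $S\im X_-=\mathbb{R}^r\times\{0\}$. Condition \ref{th:3(a)}, namely $\im X_+\subseteq\im X_-$, then yields $S\im X_+\subseteq\mathbb{R}^r\times\{0\}$, so the last $n-r$ rows of $SX_+$ vanish and $SX_+=\begin{bmatrix}\hat{X}_+\\0\end{bmatrix}$ with $\hat{X}_+$ as in \eqref{eq:def:SXhat_+}. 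Partitioning $SAS^{-1}=\begin{bmatrix}A_{11}&A_{12}\\A_{21}&A_{22}\end{bmatrix}$ and $SB=\begin{bmatrix}B_1\\B_2\end{bmatrix}$ conformably with the splitting $r+(n-r)$, and left-multiplying $X_+=AX_-+BU_-$ by $S$, I would obtain the two block identities $\hat{X}_+=A_{11}\hat{X}_-+B_1U_-$ and $0=A_{21}\hat{X}_-+B_2U_-$.

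Next I would extract full row rank of $\begin{bmatrix}\hat{X}_-\\U_-\end{bmatrix}$ from condition \ref{th:3(b)}. Since $\im X_-\times\mathbb{R}^m$ has dimension $r+m$, condition \ref{th:3(b)} gives $\rank\begin{bmatrix}X_-\\U_-\end{bmatrix}=r+m$; applying the nonsingular row transformation $\diag(S,I_m)$ preserves this rank, and because $SX_-$ has vanishing last $n-r$ rows, deleting those zero rows shows $\rank\begin{bmatrix}\hat{X}_-\\U_-\end{bmatrix}=r+m$, i.e. this matrix has full row rank and its Moore--Penrose pseudoinverse is a right inverse. The second block identity reads $\begin{bmatrix}A_{21}&B_2\end{bmatrix}\begin{bmatrix}\hat{X}_-\\U_-\end{bmatrix}=0$; right-multiplying by this right inverse forces $A_{21}=0$ and $B_2=0$, which is exactly the block-triangular form \eqref{eq:partition_proof}. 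Right-multiplying the first identity, rewritten as $\begin{bmatrix}A_{11}&B_1\end{bmatrix}\begin{bmatrix}\hat{X}_-\\U_-\end{bmatrix}=\hat{X}_+$, by the same right inverse then yields \eqref{eq:partition_formula_A_11_B1}.

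It remains to show that $A_{22}$ is Schur and $(A_{11},B_1)$ is stabilizable. Stabilizability is invariant under the state transformation $S$, so $(SAS^{-1},SB)$ is stabilizable, and I would apply the Hautus test to its block-triangular form: for every $\lambda$ with $|\lambda|\ge 1$, the matrix $\begin{bmatrix}\lambda I-A_{11}&-A_{12}&B_1\\0&\lambda I-A_{22}&0\end{bmatrix}$ must have full row rank. A left eigenvector of $A_{22}$ associated with an eigenvalue $\lambda_0$ satisfying $|\lambda_0|\ge 1$ would produce a nonzero left annihilator of this matrix, so stabilizability forces $A_{22}$ to be Schur; then $\lambda I-A_{22}$ is invertible for $|\lambda|\ge 1$, and eliminating the second block component reduces the left-null-space condition to $u^\top\begin{bmatrix}\lambda I-A_{11}&B_1\end{bmatrix}=0$, making full row rank of the whole matrix equivalent to full row rank of $\begin{bmatrix}\lambda I-A_{11}&B_1\end{bmatrix}$, i.e. stabilizability of $(A_{11},B_1)$. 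The one step that I expect to require real care is this Hautus decomposition, where the coupling term $A_{12}$ between the two blocks must be tracked precisely when reducing the rank condition; the remainder is bookkeeping with $S$ and the right inverse.
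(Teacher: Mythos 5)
Your proof is correct and follows essentially the same route as the paper's: partition $SAS^{-1}$ and $SB$, use condition (a) to zero out the last $n-r$ rows of $SX_+$, use the rank $r+m$ from condition (b) to force $A_{21}=0$ and $B_2=0$ and to right-invert for \eqref{eq:partition_formula_A_11_B1}, and invoke invariance of stabilizability under the state transformation $S$. The only difference is that you spell out, via the Hautus test, the standard fact that block-triangularity with $B_2=0$ yields Schurness of $A_{22}$ and stabilizability of $(A_{11},B_1)$, which the paper simply asserts.
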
\vspace{0.25 cm}
\begin{proof}
Write $SAS^{-1}=\begin{bmatrix}
A_{11} & A_{12} \\
A_{21} & A_{22}
\end{bmatrix}$ and \mbox{$SB=\begin{bmatrix}
B_1 \\ B_2
\end{bmatrix}$},
where $A_{21} \in \mathbb{R}^{(n-r)\times r}$ and $B_2 \in \mathbb{R}^{(n-r) \times m}$. First, we show that $A_{21}$ and $B_2$ are both equal to zero. Since $\im X_+\subseteq \im X_-$, we have \mbox{$SX_+=\begin{bmatrix}
\hat{X}_+^\top & 0
\end{bmatrix}^\top$}. It follows from \mbox{$SX_+=(SAS^{-1})SX_-+SBU_-$} that \mbox{$0=A_{21}\hat{X}_-+B_{2}U_-$} and $\hat{X}_+=A_{11}\hat{X}_-+B_{1}U_-$. Since \mbox{$\im \begin{bmatrix}
X_-^\top &
U_-^\top
\end{bmatrix}^\top=\im X_- \times \mathbb{R}^m$}, we have $\rank\begin{bmatrix}
\hat{X}_-^\top &
U_-^\top
\end{bmatrix}=r+m$. Thus, \mbox{$0=A_{21}\hat{X}_-+B_{2}U_-$} implies that $A_{21}=0$ and $B_2=0$. Now, we observe that $(A,B)\in\Sigma_\text{stab}$ implies $(SAS^{-1},SB)\in\Sigma_\text{stab}$. This, together with $B_2=0$ and $A_{21}=0$, implies that $A_{22}$ is Schur and $(A_{11},B_1)$ is stabilizable. Finally, the formula in \eqref{eq:partition_formula_A_11_B1} follows immediately from $\hat{X}_+=\begin{bmatrix}
A_{11} & B_{1}
\end{bmatrix}\begin{bmatrix}
\hat{X}_-^\top & U_-^\top
\end{bmatrix}^\top$ and the fact that $\begin{bmatrix}
\hat{X}_-^\top &
U_-^\top
\end{bmatrix}$ is of full column rank. 
\end{proof}

We note that, unlike Kalman decomposition, the data-driven decomposition provided by Lemma~\ref{lem:partition_proof} does not guarantee the pair $(A_{11},B_1)$ to be controllable. Now, we use Lemma~\ref{lem:imker_all} and Theorem~\ref{th:nec_cond} to prove Theorem~\ref{th:3}. 

\textit{Proof of Theorem~\ref{th:3}:} The ``only if'' part follows immediately from parts (b) and (c) of Theorem~\ref{th:nec_cond}. To prove the ``if'' part, assume that conditions (a) and (b) hold. Since $(A_\text{true},B_\text{true})\in\Sigma_\mathcal{D}\cap\Sigma_\text{stab}$, it follows from Lemma~\ref{lem:partition_proof} that
\begin{equation}
\label{eq:partition_proof_true}
SA_\text{true}S^{-1}=\begin{bmatrix}
\hat{A}_{11} & \hat{A}_{12} \\
0 & \hat{A}_{22}
\end{bmatrix}\ \text{ and }\ SB_\text{true}=\begin{bmatrix}
\hat{B}_1 \\ 0
\end{bmatrix},
\end{equation}
where $(\hat{A}_{11},\hat{B}_1)\in\mathbb{R}^{r\times r}\times \mathbb{R}^{r\times m}$ is stabilizable and \mbox{$\hat{A}_{22}\in\mathbb{R}^{(n-r)\times (n-r)}$} is Schur. Since $(A_\text{true},B_\text{true})$ is stabilizable, there exists a $K$ such that $A_\text{true}+B_\text{true}K$ is Schur. We show that $K$ is a stabilizing feedback gain for all systems belonging to $\Sigma_\mathcal{D}\cap\Sigma_\text{stab}$. For this, let $K_1\in\mathbb{R}^{m\times r}$ and $K_2\in\mathbb{R}^{m\times (n-r)}$ be defined by $\begin{bmatrix}
K_1 & K_2
\end{bmatrix}=KS^{-1}$ and observe that
\begin{equation}
S(A_\text{true}+B_\text{true}K)S^{-1}=\begin{bmatrix}
\hat{A}_{11}+\hat{B}_1K_1 & \hat{A}_{12}+\hat{B}_1K_2 \\
0 & \hat{A}_{22}
\end{bmatrix}
\end{equation}
is Schur. This implies that $\hat{A}_{11}+\hat{B}_1K_1$ is Schur. Now, let \mbox{$(A,B)\in\Sigma_\mathcal{D}\cap\Sigma_\text{stab}$}. Define $A_{11}$, $A_{12}$, $A_{22}$, and $B_1$ as in \eqref{eq:partition_proof}. In view of Lemma~\ref{lem:partition_proof}, $A_{22}$ is Schur and we have
\begin{equation}
\label{eq:id_cont_part}
\begin{bmatrix}
A_{11} & B_1
\end{bmatrix}=\begin{bmatrix}
\hat{A}_{11} & \hat{B}_1
\end{bmatrix}=\hat{X}_+\begin{bmatrix}
\hat{X}_- \\
U_-
\end{bmatrix}^\dagger. 
\end{equation}
Therefore, since $\hat{A}_{11}+\hat{B}_1K_1$ is Schur, we have that $A_{11}+B_1K_1$ is Schur. Hence,
\begin{equation}
S(A+BK)S^{-1}=\begin{bmatrix}
A_{11}+B_1K_1 & A_{12}+B_1K_2 \\
0 & A_{22}
\end{bmatrix},
\end{equation}
is Schur. Therefore, $A+BK$ is Schur, which implies that the \mbox{data $\mathcal{D}$} are $\Sigma_\text{stab}$--informative for stabilization.   \hfill \QED

\textit{Proof of Proposition~\ref{prop:K=[K1 K2]}.} (a) Since the data $\mathcal{D}$ are \mbox{$\Sigma_\text{stab}$--informative} for stabilization, the conditions (a) and (b) in Theorem~\ref{th:3} hold. Let $\hat{A}_{11}$, $\hat{A}_{12}$, $\hat{A}_{22}$, and $\hat{B}_1$ be defined as in \eqref{eq:partition_proof_true}. It follows from Lemma~\ref{lem:partition_proof} that $\hat{A}_{22}$ is Schur. Let $K$ be such that \mbox{$A_\text{true}+B_\text{true}K$} is Schur. Define $K_1\in\mathbb{R}^{m\times n_1}$ and $K_2\in\mathbb{R}^{m\times n_2}$ by \mbox{$\begin{bmatrix}
K_1 & K_2
\end{bmatrix}=KS^{-1}$}. Observe that
\begin{equation}
S(A_\text{true}+B_\text{true}K)S^{-1}=\begin{bmatrix}
\hat{A}_{11}+\hat{B}_1K_1 & \hat{A}_{12}+\hat{B}_1K_2 \\
0 & \hat{A}_{22}
\end{bmatrix}.
\end{equation}
Since $S(A_\text{true}+B_\text{true}K)S^{-1}$ is Schur, this implies that $\hat{A}_{11}+\hat{B}_1K_1$ is Schur. Now, it follows from \eqref{eq:id_cont_part} that
\begin{equation}
\label{eq:A_{11}+B_1K_1}
\hat{A}_{11}+\hat{B}_1K_1=\hat{X}_+\begin{bmatrix}
\hat{X}_- \\
U_-
\end{bmatrix}^\dagger\begin{bmatrix}
I \\ K_1
\end{bmatrix}.
\end{equation}
Let $P>0$ be such that the Lyapunov inequality
\begin{equation}
P-(\hat{A}_{11}+\hat{B}_1K_1)P(\hat{A}_{11}+\hat{B}_1K_1)^\top>0
\end{equation}
holds. Now, take $\Theta=\begin{bmatrix}
\hat{X}_- \\
U_-
\end{bmatrix}^\dagger\begin{bmatrix}
I \\ K_1
\end{bmatrix}P$.
We show that, with this $\Theta$, the LMI \eqref{eq:dd_lmi_stab} is feasible. To this end, we show that the lower block of \eqref{eq:dd_lmi_stab}, $\hat{X}_-\Theta$, and its Schur complement with respect to the lower block, $\hat{X}_-\Theta-\hat{X}_+\Theta(\hat{X}_-\Theta)^{-1}\Theta^\top \hat{X}_+^\top$, are positive definite. First, we note that $\hat{X}_-\Theta=P>0$ and $\hat{X}_+\Theta P^{-1}=\hat{A}_{11}+\hat{B}_1K_1$. Hence, we have
\begin{equation}
\begin{split}
\hat{X}_-\Theta-\hat{X}_+\Theta(\hat{X}_-\Theta)^{-1}\Theta^\top \hat{X}_+^\top=
P-\hat{X}_+\Theta P^{-1}\Theta^\top \hat{X}_+^\top \\
=P-(\hat{A}_{11}+\hat{B}_1K_1)P(\hat{A}_{11}+\hat{B}_1K_1)^\top>0.
\end{split}
\end{equation}
Therefore, LMI \eqref{eq:dd_lmi_stab} is feasible. 

(b) Assume that $\Theta$ satisfies LMI \eqref{eq:dd_lmi_stab}. Let \mbox{$(A,B)\in\Sigma_\mathcal{D}\cap\Sigma_\text{stab}$}, $K_1=U_-\Theta(\hat{X}_-\Theta)^{-1}$, $K_2\in\mathbb{R}^{m\times r}$, and $K=\begin{bmatrix}
K_1 & K_2
\end{bmatrix}S$. We show that $A+BK$ is Schur. We note that conditions (a) and (b) of Theorem~\ref{th:3} hold. Hence, let $A_{11}$, $A_{12}$, $A_{22}$, and $B_1$ be defined as in Lemma~\ref{lem:partition_proof}. Hence,
\begin{equation}
S(A+BK)S^{-1}=\begin{bmatrix}
A_{11}+B_1K_1 & A_{12}+B_1K_2 \\
0 & A_{22}
\end{bmatrix}.
\end{equation}
Since $A_{22}$ is Schur, it suffices now to show that \mbox{$A_{11}+B_1K_1$} is Schur. Since $\hat{X}_+=A_{11}+B_1U_-$, we have
\begin{equation}
\begin{split}
&A_{11}+B_1K_1=A_{11}+B_1U_-\Theta(\hat{X}_-\Theta)^{-1}\\
&=(A_{11}\hat{X}_-+B_1U_-)\Theta(\hat{X}_-\Theta)^{-1}=\hat{X}_+\Theta(\hat{X}_-\Theta)^{-1}.
\end{split}
\end{equation} 
Now, based on a Schur complement argument, we observe that LMI~\eqref{eq:dd_lmi_stab} implies
\begin{equation}
\begin{split}
&\hat{X}_-\Theta-\hat{X}_+\Theta(\hat{X}_-\Theta)^{-1}\Theta^\top \hat{X}_+^\top \\
&=\hat{X}_-\Theta-(A_{11}+B_1K_1)(\hat{X}_-\Theta)(A_{11}+B_1K_1)^\top>0.
\end{split}
\end{equation}
Since $\hat{X}_-\Theta>0$, this implies that $A_{11}+B_1K_1$ is Schur. \hfill \QED

We close this section with two remarks.

\begin{remark}
\label{rem:18}
Suppose that the data $\mathcal{D}$ are \mbox{$\Sigma_\text{stab}$--informative} for stabilization, but they are \emph{not} informative for stabilization. In this case, the following facts hold: Theorems~\ref{th:1} implies that $\Sigma_\mathcal{D}\cap\Sigma_\text{cont}$ is empty, hence, the true system is stabilizable but not controllable; Theorems~\ref{th:12} and~\ref{th:3} imply that the state data satisfy $\rank X_-<n$; Theorem~\ref{th:3(b)} implies that $B_\text{true}$ can be uniquely recovered from the data, i.e., every $(A,B)\in\Sigma_\mathcal{D}$ is such that $B=B_\text{true}$.
\end{remark}

\begin{remark}
It was shown in \cite[Sec. IV]{van2021matrix} that if there exists a $K$ such that $A+BK$ is Schur for all $(A,B)\in\Sigma_\mathcal{D}$, then there exists a $P>0$ such that the Lyapunov inequality \mbox{$P-(A+BK) P(A+BK)^\top>0$} holds for all $(A,B)\in\Sigma_\mathcal{D}$, i.e., the data $\mathcal{D}$ are informative for stabilization \emph{if and only if} they are informative for quadratic stabilization (see \cite[Def. 2]{van2021matrix}). However, we note that this does not necessarily hold in case the data are \mbox{$\Sigma_\text{stab}$--informative} for stabilization. This can be shown using Example~\ref{ex:1}. In that example, the closed-loop system for every member of $\Sigma_\mathcal{D}\cap\Sigma_\text{stab}$ is of the form
\begin{equation}
\label{eq:cl_system_ex1}
A+BK=\begin{bmatrix}
0 & \alpha \\
0 & \beta
\end{bmatrix}\text{ for some  }\alpha\in\mathbb{R}\text{ and }|\beta|<1.
\end{equation}
All such matrices are Schur. However, we show that there does not exist a common $P$ for all these matrices. Aiming for a contradiction, suppose that there exists a positive definite \mbox{$P\coloneqq\begin{bmatrix}
P_{11} & P_{12} \\
P_{12} & P_{22}
\end{bmatrix}\in\mathbb{R}^{2\times 2}$} with the property that the Lyapunov inequality,
\begin{equation}
\label{eq:ex1_Lyap}
\begin{bmatrix}
P_{11} & P_{12} \\
P_{12} & P_{22}
\end{bmatrix}-\begin{bmatrix}
0 & \alpha \\
0 & \beta
\end{bmatrix}\begin{bmatrix}
P_{11} & P_{12} \\
P_{12} & P_{22}
\end{bmatrix}\begin{bmatrix}
0 & 0 \\
\alpha & \beta
\end{bmatrix}
>0,
\end{equation}
holds for all $\alpha\in\mathbb{R}$ and $|\beta|<1$. Multiply \eqref{eq:ex1_Lyap} from left and right, respectively, by $\begin{bmatrix}
1 & 0
\end{bmatrix}$ and $\begin{bmatrix}
1 & 0
\end{bmatrix}^\top$ to have $P_{11}-\alpha^2 P_{22}>0$. Since $P_{22}>0$, it is evident that there exists a sufficiently large $\alpha$ such that this inequality does not hold. Hence, we reach a contradiction. Therefore, the Lyapunov inequality \eqref{eq:ex1_Lyap} does not hold for all closed-loop systems consistent with the data and the prior knowledge.
\end{remark}

\section{Numerical Example}
To demonstrate our results, we consider a three-tank system, depicted in Fig. \ref{fig:three_tank}, which is described by the continuous-time model:
\begin{equation}
\dot{x}(\tau) = A_c x(\tau) + B_c u(\tau),
\end{equation}
where $\tau \in \mathbb{R}$, $ x(\tau) \in \mathbb{R}^3 $ is the state vector, and $ u(\tau) \in \mathbb{R} $ is the control input. 
The system matrices $A_c$ and $B_c$ are given by
\begin{equation}
A_c = 
\begin{bmatrix}
-\frac{k_{01}+k_{12}}{a_1} & \frac{k_{12}}{a_1} & 0 \\
\frac{k_{12}}{a_2} & -\frac{k_{12}}{a_2} & \frac{k_{23}}{a_2} \\
0 & 0 & -\frac{k_{23}}{a_3}
\end{bmatrix}\ \text{ and }\
B_c = 
\begin{bmatrix}
0 \\ \frac{1}{a_2} \\ 0
\end{bmatrix},
\end{equation}
with tank areas $ a_1 = a_2 = a_3 = 1 $ and flow coefficients $ k_{01} = 0.1 $, $ k_{12} = 0.5 $, and $ k_{23} = 0.5 $.
The $i$th entry of the state, $x_i$, is the height of the fluid in tank $i = 1,2,3$, and the input $u$ is the flow rate between tank $2$ and the basin. Due to the structural properties of this system, the third mode of the system, $x_3$, is uncontrollable. This lack of controllability arises from the directional flow from tank 3 to 2.
The system is discretized using the zero-order hold method with a sampling time of $0.1$, yielding the matrices
\begin{equation}
A_{\text{true}} =
\begin{bmatrix}
0.9429 & 0.0473 & 0.0012 \\
0.0473 & 0.9524 & 0.0476 \\
0      & 0      & 0.9512
\end{bmatrix}\ \text{ and }\
B_{\text{true}} =
\begin{bmatrix}
0.0024 \\
0.0976 \\
0
\end{bmatrix}.
\end{equation}

\begin{figure}
    \centering
    \begin{tikzpicture}[cross/.style={path picture={ 
    \draw[black](path picture bounding box.south east) -- (path picture bounding box.north west) (path picture bounding box.south west) -- (path picture bounding box.north east);}}]
    \draw (0,0.3)--(0,-0.25)--(1.4,-0.25)--(1.4,-0.7);
    \draw (1.6,-0.7)--(1.6,-0.25)--(3,-0.25)--(3,0.3);
    \node [] at (1.5,0){Tank 3};
    \draw[|<->|] (-0.25,0.25)--(-0.25,-0.25);
    \node[] at ( -0.5, 0 ) {$x_3$};
    \draw (0,-0.5)--(0,-1.25)--(1.4,-1.25)--(1.4,-1.75);
    \draw (1.6,-1.75)--(1.6,-1.25)--(3,-1.25)--(3,-1)--(3.25,-1);
    \draw (3.25,-0.8)--(3,-0.8)--(3,-0.5);
    \node [draw,circle,minimum width=0.01 cm,fill=red!20] at (1.5,-1.5){};
    \node [] at (1.1,-1.55){$u$};
    \draw[fill=black] (1.5,-1.36) -- (1.45,-1.45) -- (1.55,-1.45) -- cycle;
    \draw[fill=black] (1.5,-1.64) -- (1.45,-1.55) -- (1.55,-1.55) -- cycle;
    \node [] at (1.5,-1){Tank 2};
    \draw[|<->|] (-0.25,-0.65)--(-0.25,-1.25);
    \node[] at ( -0.5, -0.95 ) {$x_2$};
    \draw (3.5,-0.5)--(3.5,-0.8)--(3.25,-0.8);
    \draw (3.25,-1)--(3.5,-1)--(3.5,-1.25)--(4.9,-1.25)--(4.9,-1.75);
    \draw (5.1,-1.75)--(5.1,-1.25)--(6.5,-1.25)--(6.5,-0.5);
    \node [] at (5,-1){Tank 1};
    \draw[|<->|] (6.75,-1.25)--(6.75,-0.65);
    \node[] at ( 7, -0.95 ) {$x_1$};
    
    \draw (-0.5,-1.65)--(-0.5,-2.1)--(7,-2.1)--(7,-1.65);
    \node [] at (3.25,-1.9){Basin};
    
    \begin{pgfonlayer}{background}
        \filldraw[blue!10] (0,0.25)--(0,-0.25)--(1.4,-0.25)--(1.4,-0.65)--(1.6,-0.65)--(1.6,-0.25)--(3,-0.25)--(3,0.25)--cycle;
        \filldraw[blue!10] (0,-0.65)--(0,-1.25)--(1.4,-1.25)--(1.4,-1.75)--(1.6,-1.75)--(1.6,-1.25)--(3,-1.25)--(3,-1)--(3.25,-1)--(3.25,-0.8)--(3,-0.8)--(3,-0.65)--cycle;
        \filldraw[blue!10] (3.5,-0.65)--(3.5,-0.8)--(3.25,-0.8)--(3.25,-1)--(3.5,-1)--(3.5,-1.25)--(4.9,-1.25)--(4.9,-1.75)--(5.1,-1.75)--(5.1,-1.25)--(6.5,-1.25)--(6.5,-0.65)--cycle;
        \filldraw[blue!10] (-0.5,-1.7)--(-0.5,-2.1)--(7,-2.1)--(7,-1.7)--cycle;
    \end{pgfonlayer}
    \end{tikzpicture}
    \caption{Schematic representation of the three-tank system.}
    \label{fig:three_tank}
\end{figure}
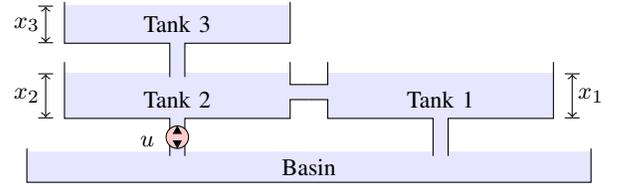
We perform an open-loop experiment of length $T=5$, during which we collect the following input-state data:
\begin{table}[h]
    \centering
    \begin{tabular}{c|cccccc}
       $t$  & $0$ & $1$ & $2$ & $3$ & $4$ & $5$ \\ \hline
       $u(t)$ &  $1$ & $0$ & $-1$ & $0$ & $1$ &  \\ \hline
        \multirow{3}{*}{$x(t)$} & $1$ & $1.04$  &  $1.0778$  & $1.1086$ & $1.1334$ & $1.1575$ \\
        & $2$ & $2.0498$  & $2.0015$ & $1.8597$ & $1.8237$ & $1.8881$ \\
        & $0$ & $0$  & $0$  & $0$ & $0$ & $0$
    \end{tabular}
\end{table}

Here, $X_{-}$ has rank $r=2$. A matrix $S$, satisfying \eqref{eq:def:SXhat_-}, can be simply taken to be $S = I$.
We solve \eqref{eq:dd_lmi_stab} using MATLAB\footnote{The MATLAB code for this example is available at https://github.com/TrenBaltussen/Data-Driven-Stabilization.} with YALMIP \cite{yalmip} and MOSEK \cite{mosek}, which yields
\begin{equation}
\Theta \!=\! \begin{bmatrix}
-47.4426 & -30.3733 &  -1.5964 & 49.2034 & 36.0139 \\
-0.9001 & 17.7153 & 32.3315 & 20.4120 & -68.9591
\end{bmatrix}^\top\!\!.
\end{equation}
Following Proposition \ref{prop:K=[K1 K2]}, we compute $ K_1 = \begin{bmatrix} -2.7728  & -9.7123 \end{bmatrix}$ and set $K_2$ to zero, which yields $K = \begin{bmatrix}
    -2.7728 &  -9.7123 & 0
\end{bmatrix}$ that is a stabilizing feedback gain for all systems within $\Sigma_\mathcal{D}\cap\Sigma_{\text{stab}}$.

To illustrate the advantage of incorporating prior knowledge of stabilizability, we perform Monte Carlo simulations with $1000$ scenarios. For each scenario, we simulate the system from $t=0$ to $t = 100$ with the input sequence and the entries of the initial condition drawn independently at random from a Poisson distribution with parameter $\lambda = 1$. To investigate the effect of the number of samples on the informativity of the data, we use the first $T$ samples for each round of analysis. Here, we take $T=3$, $4$, $5$, $10$, and finally, the entire dataset $T=100$, see Table \ref{tab:MC_Analysis}. We see that for $T = 3$, none of the scenarios yield informative data for system identification because $T<n+m$, and thus, $\small \begin{bmatrix} X_-^\top & U_-^\top \end{bmatrix}$ does not have full column rank, which is a necessary and sufficient condition for system identification \cite[Prop. 6]{van2020data}. Nevertheless, in this case, $8.1\%$ of the datasets are informative for stabilization (without using prior knowledge). At this point, $42 \%$ of the datasets are $\Sigma_{\text{stab}}$--informative for stabilization. By increasing the number of samples, the percentage of datasets that are informative for identification approaches that of stabilization (without using prior knowledge). Both of these numbers eventually reach $63.2\%$ and remain unchanged. Interestingly, with $T\geq10$, we see that all the datasets are $\Sigma_{\text{stab}}$--informative for stabilization.
This clearly demonstrates the advantage of incorporating stabilizability as prior knowledge in data-driven control.

\begin{table}[t]
\centering
\caption{Informativity of randomly generated data.}
\label{tab:MC_Analysis}
\begin{tabular}{cccc}
\toprule
& \multirow{2}{*}{\parbox{3.4cm}{\centering \vspace{1mm} Informative for\\system identification}} &  \multicolumn{2}{c}{$\Sigma_\text{pk}$--informative for stabilization} \\
\cmidrule(l){3-4}
$T$  &  & $\Sigma_\text{pk}=\mathcal{M}$ & $\Sigma_\text{pk}=\Sigma_\text{stab}$ \\
\midrule
$3$ & $0 \% $ & $8.1 \% $ & $42 \% $ \\
$4$ & $62.4 \% $ & $63.2 \% $ & $99.4 \% $ \\
$5$ & $62.8 \% $ & $63.2 \% $ & $99.8 \% $ \\
$10$ & $63.2 \% $ & $63.2 \% $ & $100 \% $ \\
$100$ & $63.2 \% $ & $63.2 \% $ & $100 \% $ \\
\bottomrule
\end{tabular}
\end{table}

\section{Conclusions}

In this work, data-driven stabilization using prior knowledge on controllability and stabilizability has been studied. It has been shown that data-driven stabilization using controllability as prior knowledge is equivalent to data-driven stabilization without prior knowledge. It has also been shown that if the state data satisfy a rank condition, then incorporating stabilizability as prior knowledge does not lead to weaker conditions on the data. For the case where the state data are rank deficient, however, it has been shown that data-driven stabilization incorporating stabilizability as prior knowledge requires weaker conditions on the data when compared to data-driven stabilization without prior knowledge. A somewhat curious outcome of the paper is that knowledge of stabilizability may weaken the conditions on the data, while controllability, a stronger property, does not. This is due to the fact that in the former scenario, there exists the possibility that \emph{none} of the data-consistent systems are controllable.

A class of prior knowledge that has not been studied in this work is the one representing an upper bound on the dimension of the reachable subspace of the system. In that case, one expects that obtaining a data-driven feedback gain requires an even weaker condition. In addition, here, we only focused on noise-free data. Data-driven stabilization using such prior knowledge in the presence of noisy data is also an interesting topic that is left as future work. 

\section*{References}

\bibliographystyle{IEEEtran}
\bibliography{biblo}

\end{document}